\newtheorem{theorem}{Theorem}[section]
\newtheorem{lemma}[theorem]{Lemma}
\newtheorem{proposition}[theorem]{Proposition}
\theoremstyle{definition}
\newtheorem{definition}[theorem]{Definition}
\newtheorem{remark}[theorem]{Remark}
\newtheorem{ltheorem}{Theorem}
\def\real{\mathbb{R}}
\def\rational{\mathbb{Q}}
\def\integer{\mathbb{Z}}
\def\natural{\mathbb{N}}
\def\bS{\mathbb{S}^1}
\def\d{\operatorname{d}}
\def\cA{\mathcal{A}}
\def\nU{\mathcal{U}}
\def\nV{\mathcal{V}}
\def\quand{\quad\text{and}\quad}
\def\hf{\hat{f}}
\def\hA{\hat{A}}
\def\hB{\hat{B}}
\def\hmu{\hat{\mu}}
\def\hF{\hat{F}}
\def\hX{\hat{X}}
\def\hx{\hat{x}}
\def\hy{\hat{y}}
\def\proj{\mathbb{R}\mathbb{P}^{1}}
\def\SL{SL_d(\real)}
\def\SL2{SL_2(\real)}
\def\Diag{D_d(\real)}
\def\GL{GL_d(\real)}
\def\sims{\sim^s}
\def\simu{\sim^u}
\def\Leb{{\operatorname{Leb}}}
\newcommand{\norm}[1]{{\left\lVert  #1  \right\rVert}}
\newcommand{\abs}[1]{{\left\lvert  #1  \right\rvert}}
\title{Random product of quasi-periodic cocycles}
\author{Jamerson Bezerra}
\author{Mauricio Poletti}
\date{\today}
\newcommand{\information}{{
  \bigskip
  \footnotesize
%	\textbf{Davi Obata}: \textsc{CNRS-Laboratoire de Math\'ematiques d'Orsay, UMR 8628, Universit\'e Paris-Sud 11, Orsay Cedex 91405, France } \par\nopagebreak
%  \textsc{Instituto de Matem\'atica, Universidade Federal do Rio de Janeiro, P.O. Box 68530, 21945-970, Rio de Janeiro Brazil}\par\nopagebreak
%  \textit{E-mail:} \texttt{davi.obata@math.u-psud.fr}

  \medskip
  \textbf{Mauricio Poletti}: \textsc{CNRS-Laboratoire de Math\'ematiques d'Orsay, UMR 8628, Universit\'e Paris-Sud 11, Orsay Cedex 91405, France } \par\nopagebreak
  \textit{E-mail:} \texttt{mpoletti@impa.br}

  \medskip
  \textbf{Jamerson Bezerra}: \textsc{IMPA-Instituto Nacional de Matemática Pura e Aplicada, Rio de Janeiro, CEP: 22460-320, Brazil}
  \par\nopagebreak
  \textit{E-mail:} \texttt{j.douglas.santos@gmail.com}
}}
\begin{document}
  
\begin{abstract}
Given a finite set of quasi-periodic cocycles the random product of them is defined as the random composition according to some probability measure.

We prove that the set of $C^r$, $0\leq r \leq \infty$ (or analytic) $k+1$-tuples of quasi periodic cocycles taking values in $\SL2$ such that the random product of them has positive Lyapunov exponent contains a $C^0$ open and $C^r$ dense subset which is formed by $C^0$ continuity point of the Lyapunov exponent

For $k+1$-tuples of quasi periodic cocycles taking values in $\GL$ for $d>2$, we prove that if one of them is diagonal, then there exists a $C^r$ dense set of such $k+1$-tuples which has simples Lyapunov spectrum and are $C^0$ continuity point of the Lyapunov exponent.
\end{abstract}

\maketitle

\section{Introduction}

When studying the Lyapunov exponents of linear cocycles two main questions appears frequently: \emph{ Are the exponents continuous with respect to the cocycle and how frequently do we have the maximum number of different exponents?}

We say that the Lyapunov spectrum of a cocycle is \emph{simple} if it has the maximum number of different exponents (this is the case when all Oseledets spaces are one dimensional). If we deal with two dimensional cocycles, this means our matrices take values in $\SL2$, much more is known than in the higher dimensional case, when it takes values in $\GL$, $d>2$.

It was proved by Bochi ~\cite{Boc02} that in the $C^0$ topology generically two dimensional cocycles either have uniform hyperbolicity or have only one exponent. Another result, by Avila~\cite{Av11}, says that in the smooth topology there is a dense set of $C^r$-cocycles for $0\leq r\leq \infty$ with simple spectrum. In particular these two results imply that in the $C^0$ topology the exponents do not behave continuously, moreover, the only continuity points are the hyperbolic and cocycles with only one exponent. 

This motivates the following question: \emph{Are there open and dense sets of cocycles with simple spectrum, or, in other words, is the simplicity of the spectrum a generic property in smooth topology?}

For two dimensional cocycles over bases with some hyperbolic behaviour this was proved in many scenarios (see \cite{Almost}, \cite{ASV13}, \cite{Pol18}).

For two dimensional smooth quasi-periodic cocycles, ($C^r$ topology, $0<r\leq \infty$) Wang and You proved in \cite{WaY} that the set with simple spectrum is not open, in particular the exponents are not continuous with respect to the cocycle.

For higher dimensional cocycles the problem of simplicity becomes more delicate, this problem goes back to the works of Guivarc'h-Raugi~\cite{GR86} and Gol'dsheid-Margulis \cite{GM89}, where they work with random product of matrices. By \emph{random product of matrices} we mean the cocycle generated by composing randomly a set of matrices accordingly to a probability in the group of matrices. 
They prove simplicity of the Lyapunov spectrum in this scenario with some generic conditions on the support of the probability measure. 

For more general higher dimensional cocycles over hyperbolic maps, if some bunching conditions are assumed, simplicity was proved to be generic in the $C^r$ topology, for $0<r<\infty$ (\cite{BoV04}, \cite{AvV1}, \cite{BPV19}).

For quasi-periodic cocycles very few is known in higher dimension about simplicity, see for example~\cite{DuK} where they find some strong conditions to have simplicity of the biggest Lyapunov exponents.

In this work we study the Lyapunov exponents 
of cocycles over dynamics that have both behaviors, a random part and a isometric (quasi periodic) part. Specifically we deal with random product of quasi-periodic cocycles by this we mean the cocycle generated by taking a probability measure in the set of quasi-periodic cocycles and iterating randomly according to this probability.

We prove that, for two dimensional cocycles, there exists an open and dense set of cocycles that are points of continuity for the Lyapunov exponents and have simple spectrum, even in the $C^0$ topology, and in higher dimension, with one of them taking values in the diagonal group, we find open and dense sets with simple spectrum.

Observe that in contrast to the non-random case, where, in the $C^0$ topology, generically we have one Lyapunov exponent outside of uniform hyperbolicity, in the random product scenario we get generically positive Lyapunov exponents also in the $C^0$ topology. This change of behavior when we ad some randomness on the dynamics was already exploited in many cases, see for example \cite{Via97}, \cite{LSSW03}, \cite{BXY17}.

The precise statements are given in the following section

\section{Definitions and Statements}

Given an invertible measurable map $f:M \rightarrow M$ and a measurable application $A: M \rightarrow \GL$, we define the linear cocycle as the map $F:M\times\real^d\rightarrow M\times\real^d$ given by
\begin{eqnarray*}
 F(x,v) = (f(x),A(x)v).
\end{eqnarray*}
Usually we denote the linear cocycle by the pair $(f,A)$ and, sometimes, when the map $f$ is fixed (and there is no ambiguity) we denote just by $A$. Its iterates are given by $F^n(x,v) = (f^n(x), A^n(x)v)$, where
\begin{eqnarray*}
 A^n(x) = 
\left\{
\begin{array}{ll}
A(f^{n-1}(x))\cdots A(x),     &\mbox{if}\ \ n \geq 1 \\
Id,     &\mbox{if}\ \ n = 0 \\
A(f^{n}(x))^{-1}\cdots A(f^{-1}(x))^{-1}      &\mbox{if}\ \ n \leq 1\\
\end{array}
\right.
\end{eqnarray*}

In the case that $f$ preserves a probability measure $\mu$ which is ergodic and  $\log||A^{\pm 1}||$ is $\mu$-integrable, Oseledets theorem~\cite{Ose68} says that there exists
$k \in \natural$, real numbers $\lambda_1(A)>...>\lambda_k(A)$ and a decomposition of $\real^d = E^1(x)\oplus\cdots\oplus E^k(x)$ by measurable subspaces $E^i(x)$ such that for $\mu$-a.e. $x \in M$ we have
\begin{itemize}
\item $A(x)E^i(x) = E^i(f(x))$ for all $i$, and 
\item $\displaystyle\lim_{|n|\to \infty}||A^n(x)v|| = \lambda_i(A),$ for all $v \in E^i(x)$.
\end{itemize}
  
The numbers $\lambda_1(A),\cdots,\lambda_k(A)$ are called the {\it Lyapunov exponents} associated with the cocycle $(f, A)$ and the set formed by them is called {\it Lyapunov spectrum}, we say that $A$ has \emph{simple spectrum} if $k=d$. 

When $M = \bS$ and $f$ is a rotation of angle $\theta \in (0,1]$ we say that the cocycle $(f,A)$ is a {\it quasi-periodic cocycle} and usually write $(\theta, A)$.

Fix $\theta_i \in (0,1]$ for all $i\in I_k=\{0,\dots,k\}$ and $\nu = \sum_{i=0}^k \nu_i\delta_i$ a probability measure on $I_k$. We will denote by $\Leb$, the Lebesgue measure on $\bS$. Let $X=I_k^{\integer}$, $\hX = X\times \bS$ and consider the invertible (locally constant) skew product 
$\hf:\hX \rightarrow \hX$, given by
\begin{eqnarray*}
\hf((x_n)_{n \in \integer},t) = ((x_{n+1})_{n \in \integer}, t + \theta_{x_0})
\end{eqnarray*}
and observe that the measure $\hmu = \nu^{\integer}\times \Leb$ is $\hf$-invariant. We also write $f_{i}:\bS\to \bS$, $f_{i}(t)=t + \theta_i$, $f^n_x = f_{x_{n-1}}\circ\cdots\circ f_{x_0}$ and $f^{-n}_x = f^{-1}_{x_{-n}}\circ\cdots\circ f^{-1}_{x_1}$, for all $n \in \natural$ and $x \in X$.

From now on we will fix $\theta_0$ irrational. In particular, we have that $(\hf,\hmu)$ is an ergodic system.

For measurable maps $A_i: \bS \rightarrow \GL$, $i \in I_k$, we define the \emph{random product} of the quasi periodic cocycles $(\theta_i,A_i)_{i\in I_k}$ as the cocycle $(\hf, \hA)$, where $\hA: \hX \rightarrow \GL$ is given by $\hA(x, t) = A_{x_0}(t)$.

Note that each vector $(A_0,...,A_k) \in (C^r(\bS,\GL))^{k+1}$ defines a random product $(\hf,\hA)$. We abuse of the notation and denote by $\hA$ the above map and the point $(A_0,...,A_k)$ which define $\hA$.

Let $0\leq r\leq \infty$ (or $r  = \omega$ for the analytic case), by $C^s$ topology in the product space $(C^r(\bS,\GL))^{k+1}$, for $s \in [0,r]$ (or $s \in [0,\infty]\cup\{\omega\}$ in the case when $r = \omega$) we mean the topology given by the distance
$$
\d_{C^s}(\hA,\hB)=\max_{i\in I_k}\d_{C^s(\bS,\GL)}(A_i,B_i),
$$
where $\hA,\hB \in (C^r(\bS,\GL))^{k+1}$ and $\d_{C^s(\bS,\GL)}$ denote the distance that generates the $C^s$ topology in $C^r(\bS,\GL)$.

We say that the $\hA\in (C^r(\bS,\GL))^{k+1}$ is a $C^s$-continuity point for the Lyapunov exponents if for all sequences $\hA_k \in C^r(\bS,\GL)$ corveging to $\hA$ in the $C^s$-topology we have that the Lyapunov exponents of $\hA_k$ converge to the Lyapunov exponents of $\hA$.

These cocycles where already introduced in \cite[section~5.2]{BPS18}, to show that cocycles over some partially hyperbolic dynamics are not continuous in general.

If our cocycle $A$ takes values in $\SL2$ recall that we have $\lambda_1(A)=-\lambda_2(A)$, in particular we have simple spectrum if and only if we have one positive exponent. If the cocycle takes values in the space of two by two matrices with positive determinant, we can reduce to $\SL2$ just dividing by the square root of the determinant.

\begin{ltheorem}\label{teo.positive.exp}
For $r \in [0,\infty]\cap\{\omega\}$, there exists a $C^0$ open and $C^r$ dense subset of $(C^r(\bS,\SL2))^{k+1}$, such that the random product defined by cocycles in this set has positive Lyapunov exponent and is a $C^0$-continuity point for the Lyapunov exponents.
\end{ltheorem}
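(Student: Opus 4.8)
The plan is to reduce the statement to a criterion for positivity of the Lyapunov exponent of a random product, and then exhibit, near any given tuple, a $C^r$-small perturbation that forces such a criterion to hold in a $C^0$-robust way. The natural tool here is a version of the Avila--Viana / Furstenberg-type criterion adapted to the fibered setting $(\hf, \hmu)$: the Lyapunov exponent $\lambda_1(\hA)$ is zero if and only if there is a measurable family of probability measures $m_x$ on $\proj$ (the projective space of $\real^2$) which is invariant under the projective action of $\hA$; equivalently, using the product structure of $X = I_k^{\integer}$ and the fact that $\hA(x,t)$ depends only on $x_0$, there is a family $\{m_t\}_{t\in\bS}$ on $\proj$ that is $\nu$-stationary, i.e. $m_{t+\theta_i} = \sum_i \nu_i \,(A_i(t))_* m_t$ pointwise in $t$. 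So the first step is to record this stationary-measure criterion and observe that for the base rotation $f_0$ (with $\theta_0$ irrational) a genuinely stationary family is highly constrained.

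The second step is the perturbation. Given any $\hA = (A_0,\dots,A_k)\in (C^r(\bS,\SL2))^{k+1}$, I would show that by an arbitrarily $C^r$-small perturbation one can arrange that along the $\theta_0$-direction the pair $(f_0, A_0)$ together with one more generator — say $A_1$ composed with the appropriate rotation — generates, on a positive-$\Leb$-measure set of $t$, a pair of projective maps with no common invariant probability measure on $\proj$: concretely one makes $A_0(t)$ strongly hyperbolic (large norm) on part of the circle with an attracting direction that rotates, while $A_1(t)$ moves that attracting direction to a different fiber, so that no pointwise-stationary family $\{m_t\}$ can be consistent. This is where the irrationality of $\theta_0$ is used crucially: it forces any invariant family to be Lebesgue-a.e. constant along the $\theta_0$-orbit closure, which contradicts the engineered non-compatibility. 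The conclusion is that the perturbed tuple has $\lambda_1 > 0$; density in $C^r$ follows because the perturbations are $C^r$-small.

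The third step is $C^0$-openness together with continuity. Here I would invoke the standard fact (Bochi--Viana type, or directly: positivity of $\lambda_1$ for $SL_2$ cocycles plus a uniform, non-vanishing, non-atomic-stationary-measure bound) that on the set where $\lambda_1(\hA)>0$ holds with the stronger quantitative property of the previous step — uniform hyperbolicity along a definite-measure subset forcing a spectral gap in a projective transfer operator — the exponent is $C^0$-locally constant in sign and, being an infimum of continuous functions (the $\frac1n\log\|\hA^n\|$ are $C^0$-continuous) that here also satisfies a matching lower-semicontinuity from the stationary-measure side, is actually $C^0$-continuous at such points. Thus this quantitative set is $C^0$-open, $C^r$-dense, consists of positive-exponent cocycles, and is contained in the $C^0$-continuity points; intersecting with $(C^r(\bS,\SL2))^{k+1}$ gives the theorem.

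The main obstacle I expect is the second step: producing a \emph{single} $C^r$-small perturbation that simultaneously (a) destroys every pointwise-stationary family $\{m_t\}$ on $\proj$ and (b) does so with enough uniformity (e.g. an $L^1$- or measure-theoretic gap, not merely a non-equality at one point) that the resulting positivity survives all further $C^0$-small perturbations. Making (b) robust — so that one genuinely lands in a $C^0$-\emph{open} set rather than just a dense $G_\delta$ — is the delicate part, and is presumably where the bulk of the paper's technical work (a quantitative Furstenberg/invariance-principle argument tailored to locally constant skew products over an irrational rotation) will go.
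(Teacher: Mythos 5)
Your overall strategy (an invariance-principle criterion ruling out fiberwise invariant families of measures, a perturbation to destroy such families, then openness and continuity) is in the same spirit as the paper, but two of your steps have genuine gaps. First, the criterion is misstated: zero exponent is \emph{not} equivalent to the existence of an invariant family of measures on $\proj$ (projective invariant measures always exist), and your ``stationarity'' equation $m_{t+\theta_i}=\sum_i\nu_i(A_i(t))_*m_t$ conflates an averaged condition with what the invariance principle actually yields, namely an $su$-invariant disintegration, which in this locally constant setting means a family $\{m_t\}$ with $(A_i(t))_*m_t=m_{t+\theta_i}$ for \emph{each} $i$. Only the forward implication (zero exponent $\Rightarrow$ such a family exists) holds, and that is all one needs; the paper uses exactly this, via \cite{Extremal} and the absence of $su$-states for ``weakly simple'' cocycles.

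Second, and more seriously, your density step would fail as described: making $A_0(t)$ ``strongly hyperbolic (large norm) on part of the circle'' is not an arbitrarily $C^r$-small perturbation, and there is no elementary way to force positivity of the exponent of the quasi-periodic factor $(\theta_0,A_0)$ by small perturbations --- that is precisely Avila's density theorem \cite{Av11}, which the paper must invoke as a black box to get its ``weakly pinching'' condition. Once $\lambda_+(A_0)>0$ is granted, the paper destroys invariant families not by engineering hyperbolicity but by a tiny rotation perturbation of $A_1$ alone, exploiting the explicit holonomy $H_t=A_0(h(t))^{-1}A_1(t)$ at a homoclinic point of the shift to move the Oseledets directions of $A_0$ off themselves on a positive measure set (``weakly twisting''). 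Finally, your openness/continuity argument via a ``spectral gap in a projective transfer operator'' is not substantiated and is not the mechanism used: the paper gets $C^0$-openness of positivity and continuity of the exponent from the $C^0$-continuity of the linear holonomies together with a compactness argument on limits of $u$-invariant measures (\cite[Theorem~A.1]{Pol18}), showing that any failure of continuity would produce an $su$-invariant measure, contradicting weak simplicity. Without these inputs (Avila's theorem for density, and the holonomy/$u$-state machinery for robustness), the plan as written does not close.
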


Let $\varphi: \bS \rightarrow \real$ be a continuous function. The \emph{Schrodinger cocycle} associated to the function $\varphi$ is defined as $(f,A_{\varphi})$, where $f: \bS\rightarrow\bS$ and $A_\varphi: \bS \rightarrow \SL2$ is given by

\begin{eqnarray*}
A_\varphi(x) =
\left(
\begin{array}{cc}
 \varphi(x) & -1 \\
 1          &  0 \\
\end{array}
\right).
\end{eqnarray*}
In the literature it is common to use the function $\varphi(x) = E - u(x)$, with $E \in \real$ and $u:\bS \rightarrow \real$ a continuous function. The reason for that notation is the relation of the Schrodinger cocycles with the Schrodinger operator $H_{u,x}: l^2(\integer)\rightarrow l^2(\integer)$,
$$
\left(H_{u,x}(z)\right)_n = z_{n+1} + z_{n-1} + u(f^n(x))z_n,
$$
given by the eigenvalue equation
$$
H_{u,x}(z) = E\cdot z.
$$
For a detailed survey on this topic see~\cite{Dam17}.

When $f$ is a rotation of angle $\theta \in (0,1]$ in $\bS$, Schrodinger cocycles, $(f,A_{\varphi})$, are quasi periodic cocycles which we will denote just by $(\theta,A_{\varphi})$.

\begin{ltheorem}\label{Schrodinger case}
 For $r \in [0,\infty] \cup \{\omega\}$, there exists a $C^0$ open and $C^r$ dense subset of $(C^r(\bS,\real))^{k+1}$, for $0\leq r\leq \infty$, such that the random product defined by the Schrodinger cocycles associated with the functions in this subset has positive Lyapunov exponent and is a $C^0$-continuity point for the Lyapunov exponents.
\end{ltheorem}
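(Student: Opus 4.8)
The plan is to deduce Theorem~\ref{Schrodinger case} from Theorem~\ref{teo.positive.exp} together with an inspection of the perturbations used in its proof. Consider the affine map $\Phi\colon (C^r(\bS,\real))^{k+1}\to (C^r(\bS,\SL2))^{k+1}$, $\Phi(\varphi_0,\dots,\varphi_k)=(A_{\varphi_0},\dots,A_{\varphi_k})$. For each $s\in[0,r]$ (and for $s=\omega$ when $r=\omega$) the map $\Phi$ is a topological embedding, indeed bi-Lipschitz onto its image, since the $C^s$-distance between $A_{\varphi_i}$ and $A_{\psi_i}$ is comparable to $\|\varphi_i-\psi_i\|_{C^s}$; and its image inside $(C^r(\bS,\SL2))^{k+1}$ is $C^0$-closed, being exactly the set of tuples all of whose matrices have constant second row $(1,0)$ (a matrix in $\SL2$ with second row $(1,0)$ is $A_b$ with $b$ its $(1,1)$-entry, and having $C^r$ entries forces $b\in C^r$). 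Moreover $\Phi$ does not alter the cocycle: the Schrodinger random product determined by $(\varphi_i)$ is, as a cocycle, the point $\Phi(\varphi)$ of $(C^r(\bS,\SL2))^{k+1}$, so its Lyapunov exponents coincide with those of $\Phi(\varphi)$. Let $\cO\subseteq(C^r(\bS,\SL2))^{k+1}$ be the $C^0$-open, $C^r$-dense set furnished by Theorem~\ref{teo.positive.exp}. Then $\Phi^{-1}(\cO)$ is $C^0$-open in $(C^r(\bS,\real))^{k+1}$, and by the two observations above every $(\varphi_i)\in\Phi^{-1}(\cO)$ has positive Lyapunov exponent and is a $C^0$-continuity point for the Lyapunov exponents: a sequence of potentials converging in $C^0$ is sent by $\Phi$ to a sequence converging in $C^0$ inside the $C^0$-closed Schrodinger slice, and we invoke the continuity granted on $\cO$. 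Hence Theorem~\ref{Schrodinger case} reduces to the single assertion that $\Phi^{-1}(\cO)$ is $C^r$-dense, equivalently that $\cO$ meets the Schrodinger slice in a $C^r$-dense subset of it.

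One cannot obtain this by merely quoting the $C^r$-density of $\cO$: the slice is thin, and the natural retraction onto it (replace each matrix by the Schrodinger matrix with the same $(1,1)$-entry) moves a point $\widehat B\in\cO$ by an amount that is small in $C^0$ only in proportion to the $C^0$-distance from $\widehat B$ to the slice, whereas remaining in the $C^0$-open set $\cO$ requires staying inside a $C^0$-ball around $\widehat B$ whose radius is not controlled uniformly. The plan is instead to re-run the construction of $\cO$ within the Schrodinger family. In the proof of Theorem~\ref{teo.positive.exp} the set $\cO$ is produced from a $C^0$-open condition $\star$ --- of the type of a ``pinching and twisting'' configuration for the random product over the irrational rotation $\theta_0$ --- which implies both positivity and $C^0$-continuity and which is reached from any tuple by a finite list of $C^r$-small perturbations; I would check that each of these perturbations has a Schrodinger counterpart. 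The relevant structural fact is that at a single phase $t\in\bS$ the matrix $A_{\varphi_i}(t)=\bigl(\begin{smallmatrix}\varphi_i(t)&-1\\ 1&0\end{smallmatrix}\bigr)$ can be prescribed, by a localized modification of $\varphi_i$, to be any matrix with second row $(1,0)$; this one-parameter family already contains elements that are as hyperbolic as one wishes (take $|\varphi_i(t)|$ large) and elements rotating directions by nearly a right angle (take $\varphi_i(t)$ near $0$, where $A_{\varphi_i}(t)$ is close to $\bigl(\begin{smallmatrix}0&-1\\ 1&0\end{smallmatrix}\bigr)$). Since one Schrodinger matrix can be made strongly hyperbolic only through a large change of its potential, the pinching required by $\star$ must be manufactured not from a single factor but from a long composition along the orbit of the irrational rotation (which is infinite and equidistributed, so the intervening phases are distinct), where a genuinely $C^r$-small perturbation of a potential accumulates over many steps into a strongly hyperbolic product --- this is precisely the mechanism already at work in Theorem~\ref{teo.positive.exp}, and one verifies that it keeps the tuple inside the affine Schrodinger family. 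A simultaneous $C^r$-small perturbation of the ``near-rotation'' type then supplies a twisting word without destroying the pinching obtained, using $\nu_i>0$ for all $i$; having reached $\star$ inside the slice, the perturbed tuple lies in $\cO$, which gives the density.

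The main obstacle is exactly this last step: reconciling the fact that $\star$ (hence $\cO$) is only $C^0$-open with the requirement of $C^r$-density, \emph{while} staying in the affine family $\{A_\varphi\}$. Concretely, one must verify that the pinching built along the rotation orbit, the twisting perturbation, and any further adjustments in the proof of Theorem~\ref{teo.positive.exp} can all be realized by modifications of the potentials $\varphi_i$ that are small in $C^r$ --- and, in the analytic case, by globally analytic perturbations such as small trigonometric polynomials rather than localized ones --- and that the $C^0$-neighbourhood on which $\star$ holds still contains the perturbed Schrodinger tuple. I expect that the perturbation lemmas underlying Theorem~\ref{teo.positive.exp} are already formulated in a way (adjusting the value of one cocycle at finitely many phases, or adding a small trigonometric polynomial) that is transparently compatible with the Schrodinger normalization, so that Theorem~\ref{Schrodinger case} follows once this bookkeeping is carried out; but this compatibility check is the one point that genuinely requires returning to the proof of Theorem~\ref{teo.positive.exp} rather than invoking its statement.
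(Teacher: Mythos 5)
Your reduction framework is sound and, in outline, is how the paper itself proceeds: the $C^0$-openness and the continuity/positivity conclusions come from the weakly-simple criterion (Proposition \ref{positivity criteria}), whose $C^0$-neighborhood pulls back along the continuous embedding $\varphi\mapsto A_\varphi$, so everything indeed reduces to $C^r$-density of ``good'' tuples \emph{inside} the Schrodinger slice, exactly as you say. Moreover the pinching half of that density is unproblematic, though not for the reason you give: in the proof of Theorem \ref{teo.positive.exp} pinching is not manufactured by accumulating small perturbations along long orbit segments, it is quoted from Avila's density theorem \cite{Av11}, and that theorem applies verbatim to potentials, so weak pinching is $C^r$-dense within the Schrodinger family for free.

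The genuine gap is the twisting step, which you leave as an unverified ``compatibility check'' and which is precisely where the Schrodinger case requires a new argument rather than bookkeeping. In the general $\SL2$ case the paper twists by replacing $A_1$ with $A_1\circ R_\theta$, a small rotation; this perturbation has no counterpart in the affine family $\{A_\varphi\}$. Perturbing the potential $\varphi_1$ near the relevant phase only moves $H_t=A_{\varphi_0}(h(t))^{-1}A_{\varphi_1}(t)$ inside a one-parameter family of lower-triangular unipotent matrices (only the lower-left entry $\varphi_0(h(t))-\varphi_1(t)$ changes), and every matrix in that family fixes the vertical direction $e_2\in\proj$. Consequently, if $e_2$ were $\Leb$-a.e.\ one of the Oseledets directions $e_\pm(t)$ of $(\theta_0,A_{\varphi_0})$, no admissible perturbation of $\varphi_1$ could produce the twisting configuration, and your scheme would stall. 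The missing ingredient is the paper's lemma that $\Leb(\bS\setminus L_2)>0$ for $L_2=\{t:\,e_2\in\{e_+(t),e_-(t)\}\}$: assuming the contrary and pushing $e_2$ forward by $A_{\varphi_0}$ once or twice forces $e_2$ to coincide with $(-1,0)$ or with $(-\varphi_0(t),1)$ a.e., i.e.\ $\varphi_0\equiv 0$, which is excluded after a preliminary dense perturbation of $\varphi_0$; one must also first arrange $\varphi_0(h(t))\neq\varphi_1(t)$ a.e.\ (e.g.\ add a small constant to $\varphi_1$) so that $e_2$ is the \emph{unique} fixed direction of $H_t$. Only with this positive-measure set of phases where $e_2\notin\{e_+(t),e_-(t)\}$ in hand do the Lusin and density-point arguments yield weak twisting by a localized (or, for $r=\omega$, global) modification of $\varphi_1$. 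Since your proposal neither identifies this obstruction nor supplies the argument that removes it, it is incomplete at exactly the point you flagged as needing verification.
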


In order to state the result in higher dimensions let $\Diag$ be the subgroup of diagonal matrices in $\GL$.

\begin{ltheorem}\label{Main Theorem 1}
 For $d > 2$ and $r \in [0,\infty]\cup\{\omega\}$, there exists a $C^r$ dense subset of $C^r(\bS,\Diag)\times (C^r(\bS,\GL))^k$ such that the random product defined by cocycles in this set has simple Lyapunov spectrum and is a $C^0$-continuity point of the Lyapunov exponents.
 If $r \in [1,\infty]\cup\{\omega\}$ this set is also $C^1$ open.
\end{ltheorem}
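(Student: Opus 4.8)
The plan is to deduce Theorem~\ref{Main Theorem 1} from a ``top gap'' statement — the $GL_N$-valued analogue of the mechanism behind Theorems~\ref{teo.positive.exp} and~\ref{Schrodinger case} — applied to exterior powers. Recall that $\hA$ has simple spectrum if and only if $\lambda_1(\wedge^j\hA)>\lambda_2(\wedge^j\hA)$ for every $j\in\{1,\dots,d-1\}$, where $\wedge^j\hA$ is the random product generated by $\wedge^j A_0,\dots,\wedge^j A_k$ acting on $\wedge^j\real^d\cong\real^{\binom{d}{j}}$. If $A_0$ is diagonal, then so is each $\wedge^j A_0$ in the basis of coordinate $j$-vectors, so $\wedge^j\hA$ is again a random product of quasi-periodic cocycles, one of which is diagonal. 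Thus it suffices to produce, for an arbitrary $(A_0,\dots,A_k)\in C^r(\bS,\Diag)\times(C^r(\bS,\GL))^k$ and every $\varepsilon>0$, a $C^r$-$\varepsilon$-perturbation for which all the gaps $\lambda_1(\wedge^j\hA)-\lambda_2(\wedge^j\hA)$ are positive, which is a $C^0$-continuity point of the Lyapunov exponents, and around which (when $r\geq 1$) this persists on a $C^1$-neighbourhood.

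For the perturbation step I would first make $A_0$ \emph{pinching}: a generic $C^r$-small perturbation inside $C^r(\bS,\Diag)$ arranges $A_0(t)=\mathrm{diag}(a_1(t),\dots,a_d(t))$ with the numbers $\int_{\bS}\log|a_l|\,d\Leb$ pairwise distinct, so that along a $\hmu$-typical $x$ with a long run of the symbol $0$ the product $\hA^n(x,t)$ becomes, after normalisation and for $n$ large, arbitrarily close to a rank-one map (projection onto a coordinate axis), and likewise for every $\wedge^j$. Next I would perturb, say, $A_1$ to destroy every invariant sub-bundle. The point is that $\hA$ admits stable and unstable holonomies given by explicit finite products of the $A_i$'s — because $\hA$ is locally constant along the Bernoulli factor and the circle factor is only rotated isometrically — so the invariance principle of Avila--Viana type applies: any $\hmu$-invariant measurable sub-bundle of $\wedge^j\real^d$, equivalently any stationary measure on the corresponding Grassmannian, must be invariant under these holonomies, hence locally constant, and is therefore forced into one of finitely many ``configurations'' of coordinate-type subspaces determined by $A_0$. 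A generic $C^r$-small perturbation of $A_1$ (for instance post-composition with a suitable family of rotations) is incompatible with all of these finitely many configurations simultaneously, so afterwards no invariant sub-bundle of any $\wedge^j\real^d$ survives. Pinching together with this strong irreducibility is exactly the non-degeneracy under which the invariance-principle criterion forces $\lambda_1(\wedge^j\hA)>\lambda_2(\wedge^j\hA)$ for every $j$ — if some gap vanished the corresponding top stationary measure would be holonomy-invariant, contradicting the destroyed configurations — hence simple spectrum, and since one perturbation handles all $j$ at once, density follows.

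For continuity and $C^1$-openness I would run the machinery developed for Theorems~\ref{teo.positive.exp}--\ref{Schrodinger case} at the level of each exterior power. Lower semicontinuity of $\lambda_1+\dots+\lambda_j$ is automatic; for the reverse inequality along a $C^0$-convergent sequence $\hA_m\to\hA$ one passes to a weak-$\ast$ limit of the stationary measures on $\hX\times\proj$ (for $\wedge^j\real^d$ in place of $\real^2$) attached to the top exponent of $\wedge^j\hA_m$, and, using that the explicit holonomies depend $C^0$-continuously on the cocycle together with the invariance principle, shows the limit is still a stationary, holonomy-invariant measure; pinching and strong irreducibility of $\wedge^j\hA$ then identify it with the unique such measure and pin down $\lambda_1(\wedge^j\hA)$, giving $\lambda_1(\wedge^j\hA_m)\to\lambda_1(\wedge^j\hA)$ for all $j$ and hence convergence of all Lyapunov exponents. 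Finally, when $r\geq 1$ the conditions ``$A_0$ pinching'' (an open condition on the $d$ integrals $\int\log|a_l|$) and ``no invariant sub-bundle of any $\wedge^j\real^d$'' persist under $C^1$-small perturbations, the latter because the finitely many obstructing configurations vary continuously and remain obstructed, so the good set is $C^1$-open.

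I expect the genuine difficulty to be the non-escape of mass in the continuity argument: since the circle factor is only isometric, $\hA$ is not fiber-bunched in the metric sense, so the standard continuity theorems cannot be quoted, and one must exploit the randomness (exactly as in the two-dimensional Theorems~\ref{teo.positive.exp}--\ref{Schrodinger case}) to obtain the uniform integrability and tightness of the stationary measures that prevent the weak-$\ast$ limit above from acquiring mass on a ``wrong'' invariant object. A secondary technical point is the bookkeeping required to make a single perturbation simultaneously pinching and strongly irreducible on every exterior power and to verify that both properties are $C^1$-robust.
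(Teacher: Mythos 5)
There is a genuine gap, and it sits exactly where you lean on the invariance principle. In dimension $d>2$ the implication you use --- ``if the gap $\lambda_1(\wedge^j\hA)-\lambda_2(\wedge^j\hA)$ vanished, the corresponding top stationary measure would be $su$-invariant'' --- is not what the invariance principle gives. That mechanism (used in the paper's Proposition \ref{positivity criteria}) is special to the $\SL2$ situation, where $\lambda_+=\lambda_-$ means \emph{all} fibered exponents coincide; for the projectivization of $\wedge^j\hA$, equality of the top two exponents does not force every invariant measure (nor the relevant $u$-state) to be holonomy-invariant. What is actually needed for $d>2$ is a full simplicity criterion with holonomies of Avila--Viana type, and this is precisely what the paper imports: Theorem \ref{Continuity condition}, quoted from \cite{PoV18}, with \emph{pinching} defined as distinctness of all sums $\lambda_{i_1}(p)+\cdots+\lambda_{i_j}(p)$ (your exterior-power pinching, so that part matches) and \emph{twisting} defined not as ``no invariant sub-bundle'' but as the integrability condition $\int_{\bS}\bigl|\log|P_{I,J}(H_t)|\bigr|\,dt<\infty$ on the minors of the homoclinic holonomy loop $H_t=A_0(h(t))^{-1}A_1(t)$. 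Your intermediate claims --- that a holonomy-invariant stationary measure on the Grassmannian is ``locally constant'' and must live on finitely many coordinate-type configurations, and that a generic rotation of $A_1$ destroys all of them and thereby produces the spectral gaps --- are a sketch of (part of) such a criterion, not a use of a quotable theorem; proving them requires the contraction analysis at the fixed point $p$, the structure and uniqueness of $u$-states, etc., i.e.\ the body of \cite{AvV1,PoV18}, which your proposal does not supply.

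The openness and continuity assertions are also not established by your argument, and this is where the paper's real technical work lies. The $C^1$-openness (and $C^r$-density) in Theorem \ref{Main Theorem 1} comes from the fact that the twisting condition is an explicit analytic condition: by Proposition \ref{Technical tool}, proved via Thom transversality against the stratification of the algebraic set $[P_{I,J}=0]$ into its singular parts, one arranges ($C^r$-densely and $C^1$-openly) that $t\mapsto P_{I,J}(A_0(h(t))^{-1}A_1(t))$ has only finitely many zeros, all of first order, so that $\log|P_{I,J}(H_t)|$ is integrable; this robust condition is then fed into the continuity statement of \cite{PoV18}. Your substitute --- ``absence of invariant sub-bundles persists under $C^1$-perturbation because the finitely many obstructing configurations vary continuously and remain obstructed'' --- has no justification: absence of a measurable invariant sub-bundle (or of an invariant family of coordinate configurations over $\bS$) is not an open condition by any soft argument, and you give no uniform mechanism producing a neighborhood. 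Similarly, your continuity argument via weak-$\ast$ limits needs uniqueness/identification of the limiting $u$-states for each exterior power, which again is the content of the criterion you have not proved. In short, the skeleton (perturb the diagonal $A_0$ to separate the sums of exponents, perturb $A_1$ through the explicit holonomy loop) agrees with the paper, but the load-bearing steps --- the higher-dimensional simplicity/continuity criterion and the transversality argument that makes the twisting condition dense and $C^1$-open --- are missing, and the invariance-principle shortcut you propose in their place is invalid for $d>2$.
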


Observe that, since the $C^r$ dense set in the Theorem \ref{Main Theorem 1} is formed by cocycles with simple Lyapunov spectrum and $C^0$-continuity points of the Lyapunov exponents, we have a $C^0$ open and $C^r$ dense set with simple Lyapunov spectrum for each $r \in [0,\infty]\cup\{\omega\}$.

\begin{remark}
We say that two cocycles $A,B\in C(\bS,\GL)$ over $f:\bS \to \bS$, are $C^r$-cohomologous if there exists $C\in C^r(\bS,\GL)$ such that $A(t)=C^{-1}(f(t))B(t)C(t)$. Cohomologous cocycles have the same Lyapunov exponents. As a consequence, Theorem \ref{Main Theorem 1},  is also valid for cocycles such that $A_0$ belongs to the set of cocycles $C^r$-cohomologous to cocycles taking values in $\Diag$.
\end{remark}

\section{Holonomies}\label{s.preliminary}

Given $x\in X$, we define its stable set as
$$
W^s(x)=\{y\in X,\text{ such that for some }k\geq 0,\, y_i=x_i\text{ for }i\geq k \},
$$
and the unstable set as 
$$
W^u(x)=\{y\in X,\text{ such that for some }k\leq 0,\, y_i=x_i\text{ for }i\leq k \}.
$$
We write $x\sims y$ if $x$ and $y$ are in the same stable set and similarly we write $x\simu y$ if $x$ and $y$ are in the same unstable set.

If $x \sims y$ we define the \emph{stable holonomy} from $x$ to $y$, $h^s_{x,y}:\bS\rightarrow \bS$, as 
$$
h^s_{x,y}=\lim_{n\to \infty}(f^n_y)^{-1}\circ f^n_x=(f^{n_0}_y)^{-1}
\circ f^{n_0}_x.
$$
where $n_0$ is the smallest integer such that $x_i = y_i$ for all $i \geq n_0$. Analogously, we define for $x\simu y$ the \emph{unstable holonomy} from $x$ to $y$, $h^u_{x,y}:\bS\rightarrow \bS$, as
$$
h^u_{x,y}=\lim_{n\to -\infty}(f^n_y)^{-1}\circ f^n_x=(f^{n_0}_y)^{-1}
\circ f^{n_0}_x.
$$
where $n_0$ is the biggest integer such that $x_i = y_i$ for all $i \leq n_0$..

Now consider $\hx,\hy\in \hX$, $\hx=(x,t)$ and $\hy=(y,t')$. We write $\hx\sims \hy$ if $x\sims y$ and $t' = h^s_{x,y}(t)$ and we write $\hx\simu\hy$ if $x\simu y$ and $t'=h^u_{x,y}(t)$.

Define the \emph{Linear Stable Holonomies}  associated with the random product $(\hf, \hA)$ as the family of linear maps $\{H^{s, \hA}_{\hx,\hy}: \real^d\rightarrow \real^d; \hx,\hy \in \hX, \hx\sims\hy\}$, given by

$$
H^{s,\hA}_{(x,t)(y,t')}=\lim_{n\to \infty} (\hA^n(y,t'))^{-1} \hA^n(x,t)=(\hA^{n_0}(y,t'))^{-1} \hA^{n_0}(x,t).
$$
where $n_0 \geq 1$, is such that $x_i=y_i$ for $i\geq n_0$. 

Analogously we define the \emph{Linear Unstable Holonomies} $\{H^{u, \hA}_{\hx,\hy}: \real^d\rightarrow \real^d; \hx,\hy \in \hX, \hx\simu\hy\}$.
We use the notations $\sim^*$ or $H^{*,\hA}_{\hx,\hy}$, meaning  that the sentence remains true for any $* \in \{s,u\}$.

Observe that for fixed $\hx,\hy \in \hX$ with $\hx\sim^* \hy$ the map 
$$
\hA \in (C^0(\bS,\GL))^{k+1}\mapsto H^{*,\hA}_{\hx,\hy}$$
varies continuously in the $C^0$ topology.

Now we will fix some notations that will be essentials in the following sections.

From now on, $p \in X$ will be the fix point of the shift map defined by $p_i=0$ for all $i \in \integer$, $z\in X$ will the homoclinic intersection point defined by $z_0 = 1$ and $z_i = 0$ for all $i\neq 0$. Let $t' = h^u_{p,z}(t)$ and define
\begin{equation}\label{eq.hH}
h=h^s_{z,p}\circ h^u_{p,z}\quand H^{\hA}_t=H^{s,\hA}_{(z,t')(p,h(t))} \circ H^{u,\hA}_{(p,t)(z,t')}.
\end{equation}
Observe that for any fixed $t\in \bS$ the map $\hA\mapsto H^{\hA}_t$ varies continuously in the $C^0$ topology. We omit the index $\hA$ when the random product that we are dealing with is clear by the context.

\section{proof of theorem~\ref{teo.positive.exp}}

In this section all cocycles takes values in $\SL2$, in particular, for any map $A$, we have at most $2$ exponents $\lambda_+(A)$ and $\lambda_{-}(A)$.
So, in this section and in the next the term Lyapunov exponents refers to $\lambda_+$ and this is not a restriction since $\lambda_+(A)=-\lambda_-(A)$.

Recall that the system $(\hf,\mu)$ is ergodic, since we are assuming that $\theta_0\in \real\setminus \rational$. Let $(\hf,\hA)$ be the random product of $(A_0,...,A_k) \in (C^0(\bS,\SL2))^{k+1}$.

\begin{definition}[Weakly pinching]
We say that the cocycle $(\hf,\hA)$ is weakly pinching if the cocycle $(\theta_0,A_0)$ has $\lambda_+(A_0)>0$ with respect to the Lebesgue measure on $\bS$.
\end{definition}

By Oseledets theorem, if $\lambda_+(A_0) > 0$, there exists a measurable map $\bS\ni t\mapsto (e_+(t),e_-(t))\in \proj\times \proj$ where $e_+(t)$ is the direction of the Oseledets decomposition corresponding to $\lambda_+(A_0)$ and $e_-(t)$ the direction corresponding to $\lambda_-(A_0)$. 

Take $p$, $z$, $h$ and $H_t$ as defined in the end of Section \ref{s.preliminary}.
\begin{definition}[weakly twisting]
We say that a weakly pinching cocycle $(\hf,\hA)$ is weakly twisting if 
$$
H_t(\{e_+(t),e_-(t)\})\cap \{e_+(h(t)),e_-(h(t))\}=\emptyset
$$ 
for a positive measure subset of $t\in \bS$
\end{definition} 
Observe that both conditions, weakly pinching and weakly twisting, only depends on $(\theta_0,A_0)$ and $(\theta_1,A_1)$.

The pinching and twisting conditions defined here (and in section~\ref{s.proofA}) are generalizations of the ones introduced in \cite{AvV1} for cocycles over hyperbolic dynamics.

We will call the cocycle $(\hf,\hA)$ \emph{weakly simple}, if it is both weakly pinching and weakly twisting.

Define the projective cocycle
$$
P\hF_{\hA}:\hX\times \proj\to \hX\times \proj,\quad (\hx,[v])\mapsto (\hf(\hx),[\hA(\hx)v])
$$
and let $m$ be an $P\hF_{\hA}$-invariant measure that projects on $\mu$.
Using Rokhlin's Disintegration Theorem, we can find a measurable map $\hx\mapsto m_{\hx}$ such that 
$$
m=\int_{\hX} m_{\hx} d\mu\quand m_{\hx}(\{\hx\}\times \proj)=1.$$

We say that $m$ is $u$-invariant if there is a total measure set $X'\subset \hX$ such that for every $\hx,\hy \in X'$ with $\hx\sim^u\hy$, we have
$(H^u_{\hx,\hy})_* m_{\hx}=m_{\hy}$.
Analogously, we say that $m$ is $s$-invariant if the same is true changing unstable by stable holonomies. Finally, $m$ is $su$-invariant if it is both $s$ an $u$ invariant.

By \cite[Proposition~3.9]{ObPol18}, weakly simple implies that the projective cocycle, $PF_{\hA}$, do not admit any $su$-invariant measure.

We have the following criteria.
\begin{proposition}\label{positivity criteria}
Let $r \in [0,\infty]\cup\{\omega\}$ and take $\hA\in (C^r(\bS,\SL2))^{k+1}$. If the random product $(\hf,\hA)$ is weakly simple then, there exists a $C^0$ neighborhood of $\hA$ in $(C^r(\bS,\SL2))^{k+1}$, such that any random product of cocycles in this neighborhood, has postive Lyapunov exponent and is a $C^0$-continuity point of the Lyapunov exponent.
\end{proposition}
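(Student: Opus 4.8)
The plan is to combine three ingredients: (i) the non-existence of $su$-invariant measures for the projective cocycle, which holds for $\hA$ weakly simple by \cite[Proposition~3.9]{ObPol18}; (ii) a criterion à la Avila--Viana / Bonatti--Gómez-Mont--Viana saying that absence of $su$-invariant measures forces the Lyapunov exponent to be positive; and (iii) an upper-semicontinuity-plus-lower-semicontinuity argument to get both $C^0$-openness and $C^0$-continuity simultaneously. First I would recall that, since all cocycles here take values in $\SL2$, one always has $\lambda_+(\hA)\geq 0$, and $\lambda_+(\hA)=0$ forces the existence of an invariant probability $m$ for $P\hF_{\hA}$ that disintegrates into $m_{\hx}$ which is \emph{both} $s$- and $u$-invariant: this is the standard dichotomy for $\SL2$ random/ergodic cocycles (the vanishing of the exponent produces a conformal structure / invariant measure on $\proj$ that must be holonomy-invariant on both stable and unstable sets because the exponent is zero in both time directions). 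Thus weak simplicity gives a contradiction, so $\lambda_+(\hA)>0$.

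Next, for the robustness statement, I would argue as follows. Weak pinching ($\lambda_+(A_0)>0$ for $(\theta_0,A_0)$) is a $C^0$-open condition on $A_0$: the fiber cocycle $(\theta_0,A_0)$ over an irrational rotation is an ergodic cocycle, and by continuity of the top exponent in the base coordinate... more precisely, since $\lambda_+(A_0)>0$ strictly, small $C^0$ perturbations keep it positive because $\lambda_+$ is in fact continuous at this point (one can even invoke that the exponent of a single $\SL2$ quasiperiodic cocycle is continuous away from the uniformly-hyperbolic-free zero set, or simply that positivity is preserved by a direct estimate using the Oseledets splitting). Granting that, the Oseledets directions $e_{\pm}$ vary measurably and, on the relevant positive-measure set of $t$, the twisting condition $H_t(\{e_+(t),e_-(t)\})\cap\{e_+(h(t)),e_-(h(t))\}=\emptyset$ is an open condition in the $C^0$ topology because (by the remark at the end of Section~\ref{s.preliminary}) $\hA\mapsto H^{\hA}_t$ is $C^0$-continuous and the Oseledets directions also move continuously, so weak twisting persists (on a perhaps smaller but still positive-measure set) under small $C^0$ perturbations. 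Hence there is a $C^0$ neighborhood $\nU$ of $\hA$ in $(C^r(\bS,\SL2))^{k+1}$ consisting entirely of weakly simple cocycles, and by the first paragraph every $\hB\in\nU$ has $\lambda_+(\hB)>0$.

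Finally, for the $C^0$-continuity point claim: I would use that $\hB\mapsto\lambda_+(\hB)$ is always upper semicontinuous (subadditivity of $\frac1n\log\|\hB^n\|$ and Fatou), while lower semicontinuity at points of $\nU$ follows from the positivity-with-invariant-measures mechanism — concretely, take $\hB_j\to\hB$ in $C^0$ with $\hB\in\nU$, pass to accumulation points of $P\hF_{\hB_j}$-invariant measures $m_j$ realizing $\lambda_+(\hB_j)$; a limit measure $m_\infty$ is $P\hF_{\hB}$-invariant, and because the $\hB_j$ are also weakly simple (they lie in $\nU$, which I arrange to be open) the usual invariance-principle argument shows $m_\infty$ cannot be $su$-invariant, and a standard integral formula for the exponent together with dominated convergence gives $\liminf_j\lambda_+(\hB_j)\geq\lambda_+(\hB)$. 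Combining the two semicontinuities yields continuity of $\lambda_+$ throughout $\nU$, which is exactly the assertion that each such cocycle is a $C^0$-continuity point.

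The main obstacle I expect is the lower-semicontinuity / continuity step: upgrading "no $su$-invariant measure" from the single cocycle $\hA$ to a whole $C^0$-neighborhood, and then pushing that through a limiting argument with a genuine integral formula for $\lambda_+$ that behaves well under $C^0$ limits of the cocycle (the invariance principle of Ledrappier/Avila--Viana is usually stated for $C^r$, $r>0$, perturbations, so some care is needed to keep everything at the $C^0$ level, presumably exploiting that the holonomies $H^{*,\hA}$ are $C^0$-continuous in $\hA$ — which the excerpt explicitly records). Verifying that the accumulation measures of the $m_j$ remain non-$su$-invariant, rather than merely non-$su$-invariant "in the limit sense," is where the real work lies; the positivity of the exponent for a fixed weakly simple cocycle is comparatively routine once the dichotomy above is in place.
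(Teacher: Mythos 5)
Your first step (positivity of $\lambda_+(\hA)$ via the invariance principle plus the absence of $su$-invariant measures for weakly simple cocycles) is exactly the paper's argument. But the robustness step contains a genuine gap: you claim that weak pinching, i.e.\ $\lambda_+(A_0)>0$ for the quasi-periodic cocycle $(\theta_0,A_0)$, is a $C^0$-open condition, and that weak twisting persists because the Oseledets directions ``move continuously.'' Both claims are false in general. By Bochi's theorem (cited in the introduction of this paper), a non--uniformly hyperbolic $\SL2$ cocycle over an irrational rotation with positive exponent can be $C^0$-perturbed to have zero exponent, so positivity of $\lambda_+(A_0)$ is not $C^0$-open; the continuity results you allude to hold in the analytic category, not in $C^0$, and the Oseledets splitting is only measurable and does not vary continuously with the cocycle. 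Consequently you cannot arrange a $C^0$-neighborhood $\nU$ consisting of weakly simple cocycles, and your lower-semicontinuity argument at points $\hB\in\nU$ — which needs the absence of $su$-invariant measures at those perturbed cocycles — collapses. You in fact flagged this (``upgrading no $su$-invariant measure from the single cocycle to a whole neighborhood'') as the main obstacle, but the fix you propose is precisely the step that fails.

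The paper circumvents this entirely: it never claims that nearby cocycles are weakly simple. Instead it argues by contradiction at the level of measures. If cocycles with zero exponent, or $C^0$-discontinuity points, accumulated on $\hA$, then each such cocycle $\hA_k$ would carry a $P\hF_{\hA_k}$-invariant measure projecting to $\hmu$ that is $su$-invariant: when $\lambda_+(\hA_k)=0$ this is the invariance principle, and when $\hA_k$ is a discontinuity point with positive exponent one writes a weak-* limit of $u$-invariant measures realizing the exponent as $\alpha m^++\beta m^-$ with $\beta>0$ and deduces that $m^-$ (hence $m^+$ as well) is $su$-invariant. Since the linear holonomies $H^{*,\hA_k}_{\hx,\hy}$ converge uniformly to $H^{*,\hA}_{\hx,\hy}$ under $C^0$ convergence of the cocycles, \cite[Theorem~A.1]{Pol18} lets one pass these $su$-invariant measures to a weak-* limit, producing an $su$-invariant measure for $P\hF_{\hA}$ itself — contradicting weak simplicity of $\hA$ alone. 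So only the single cocycle $\hA$ needs to satisfy the pinching/twisting hypotheses, and the neighborhood of positivity and continuity is obtained without any openness of those hypotheses. To repair your proposal you would have to replace the ``open set of weakly simple cocycles'' scheme by this limit-of-$su$-invariant-measures scheme.
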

\begin{proof}
The invariance principle of \cite{Extremal} (see \cite[Theorem~6.2]{Pol18} for a version that fits into our settings) says that if $\lambda_+(\hA)=0$ then any $m$, $P \hF$ invariant measure, is $su$-invariant. Since, $(\hf,\hA)$ is weakly simple, we conclude that $\lambda_+(\hA)>0$.

Take $\hA_k\to \hA$ such that $\lambda_+(\hA_k)\nrightarrow \lambda_+(\hA)$.
Consider $m_k$, $P \hF_{\hA_k}$ invariant measure that projects to $\hmu$, $u$-invariant such that 
$$
\lambda_+(\hA_k)=\int \log\frac{\norm{\hA_k(\hx)v}}{\norm{v}} d m_k.
$$
Up to taking a sub-sequence we can assume that 
$$m_k \rightharpoonup^* m\quand \lambda_+(\hA_k)\to a<\lambda_+(\hA),$$ where $m$ is an $P\hF_{\hA}$-invariant measure that projects to $\mu$.

As $\lambda_+(\hA)>0$, otherwise it is a continuity point, we have that $m=\alpha m^+ +\beta m^-$, with $\alpha+\beta=1$ where $m^*=\int \delta_{e_*(\hx)}d\mu(\hx)$ and $e_*(\hx)$ is the Oseldets sub-space corresponding to $\lambda_*(\hA)$ for $*=+$ or $-$.
So,  
$$
\int \log\frac{\norm{\hA(\hx)v}}{\norm{v}} d m=\alpha \lambda_+(\hA)+\beta \lambda_-(\hA).
$$
By assumption we have that $\alpha \lambda_+(\hA)+\beta \lambda_-(\hA)<\lambda_+(\hA)$, which implies that $\alpha<1$.

Recall that the holonomies varies continuously in the $C^0$ topology, then for every $\hx\sim^* \hy$ with $*=s\text{ or }u$, we have that $H^{*,\hA_k}_{\hx,\hy}$ converges uniformly to $H^{*,\hA}_{\hx,\hy}$, so we can apply \cite[Theorem~A.1]{Pol18} to conclude that $m$ is $u$-invariant. Observe that the measure $m^-$  is $s$-invariant and $m^-=\frac{1}{\beta}(m-\alpha m^+)$, then $m^-$ is also $u$-invariant. Analogously we conclude that $m^+$ is $su$-invariant.

Since $(\hf,\hA)$ does not admit $su$-invariant measures we conclude that $\hA$ is a continuity point of the Lyapunov exponent $\lambda_+$.

Assume now that there exists a sequence $\hA_k \in (C^r(\bS,\SL2))^{k+1}$, of $C^0$-discontinuity points of the Lyapunov exponent, converging to $\hA$ in the $C^0$ topology. Repeting the above argument we can see that for each $k \in \natural$ we can find a $PF_{\hA_k}$-invariant measure, that projects to $\hmu$ that is $su$-invariant. Passing to a subsequence we can find $m$ such that $m_k$ converges to it, again by \cite[Theorem~A.1]{Pol18} $m$ is an $su$-invariant measure for $P\hF_{\hA}$. This contradicts the fact that the random product $(\hf,\hA)$ does admit any $su$-invariant measure.

Therefore we can find a $C^0$ neighborhood of $\hA \in (C^r(\bS,\SL2))$ such that the random product defined by the cocycle in this neighborhood is a $C^0$ continuity point of the Lyapunov exponent.
\end{proof}

To conclude the proof of theorem~\ref{teo.positive.exp}, we just need to prove that the weakly simple random products are $C^r$ dense.

\begin{proposition}
For $r\in [0,\infty]\cup\{\omega\}$, there exists a $C^r$ dense set of $C^r(\bS,\SL2)^2$ such that for any $(A_0,A_1)$ in this set, the random product of $(\theta_i,A_i)_{i\in I_k}$ is weakly simple for all $(A_2,...,A_k)\in (C^r(\bS,\SL2))^{k-1}$.
\end{proposition}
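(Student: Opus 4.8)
The plan is to perturb the first two coordinates independently, using that \emph{weakly pinching} depends only on $(\theta_0,A_0)$ and \emph{weakly twisting} only on $(\theta_0,A_0)$ and $(\theta_1,A_1)$. So it suffices to produce a $C^r$-dense set of pairs $(A_0,A_1)\in C^r(\bS,\SL2)^2$ with $\lambda_+(A_0)>0$ (with respect to $\Leb$) that are moreover weakly twisting; for any such pair and any $(A_2,\dots,A_k)$ the random product of $(\theta_i,A_i)_{i\in I_k}$ is then weakly simple. Given $(A_0,A_1)$ and $\epsilon>0$, I would first invoke the density of positive Lyapunov exponents for quasi-periodic $\SL2$ cocycles (\cite{Av11}) to replace $A_0$ by some $\tilde A_0$ with $\d_{C^r}(\tilde A_0,A_0)<\epsilon$ and $\lambda_+(\tilde A_0)>0$, and then freeze $\tilde A_0$: weakly pinching will hold for every pair with first coordinate $\tilde A_0$. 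Let $e_\pm(t)\in\proj$ be the measurable Oseledets directions of $(\theta_0,\tilde A_0)$, defined for $\Leb$-a.e.\ $t$ and satisfying $\tilde A_0(s)e_\pm(s)=e_\pm(s+\theta_0)$.

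Next I would make the weakly twisting condition explicit. Unwinding the definitions of $p$, $z$, $h$, $H^s$, $H^u$ in Section~\ref{s.preliminary} for the pair $(\tilde A_0,A_1)$: since $p$ and $z$ coincide in all negative coordinates the unstable part is trivial, $h^u_{p,z}=\id$ and $H^u_{(p,t)(z,t)}=\id$; since they differ only at coordinate $0$ one obtains $h(t)=t+\theta_1-\theta_0$ and $H_t=\tilde A_0(h(t))^{-1}A_1(t)$. As $\tilde A_0(h(t))$ carries the Oseledets pair at $h(t)$ to the one at $h(t)+\theta_0=t+\theta_1$, the weakly twisting condition for $(\tilde A_0,A_1)$ is equivalent to: $A_1(t)\{e_+(t),e_-(t)\}\cap\{e_+(t+\theta_1),e_-(t+\theta_1)\}=\emptyset$ for $t$ in a set of positive $\Leb$-measure.

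To arrange this by an arbitrarily $C^r$-small perturbation of $A_1$ is the crux, the difficulty being that $e_\pm$ is only measurable, so one cannot adjust $A_1(t)$ pointwise in $t$ and stay $C^r$-small. I would instead perturb by a \emph{constant} rotation: set $A_1^{(s)}=R_sA_1$, where $R_s\in SO(2)\subset\SL2$ is the rotation by angle $s$, so that $\d_{C^r}(A_1^{(s)},A_1)\to 0$ as $s\to 0$, and run a Fubini argument in $s$. For $\Leb$-a.e.\ fixed $t$, each map $s\in[0,\pi)\mapsto R_s\bigl(A_1(t)e_\pm(t)\bigr)\in\proj$ is a bijection, so there are at most four values of $s$ for which $A_1^{(s)}(t)$ sends some $e_\pm(t)$ onto some $e_\pm(t+\theta_1)$; hence the ``bad'' set $B=\{(t,s):A_1^{(s)}(t)\{e_+(t),e_-(t)\}\cap\{e_+(t+\theta_1),e_-(t+\theta_1)\}\neq\emptyset\}\subset\bS\times[0,\pi)$ is measurable and $\{s:(t,s)\in B\}$ is finite for a.e.\ $t$; by Fubini $B$ is null, so $\{t:(t,s)\in B\}$ is null for a.e.\ $s$. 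Choosing such an $s$ small enough gives $A_1'=R_sA_1$ with $\d_{C^r}(A_1',A_1)<\epsilon$ for which $(\tilde A_0,A_1')$ is weakly twisting (in fact for a.e.\ $t$). Then $(\tilde A_0,A_1')$ is $\epsilon$-close to $(A_0,A_1)$ and is both weakly pinching and weakly twisting, which proves the asserted $C^r$-density.

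I expect the two genuinely delicate points to be: the bookkeeping in the holonomy computation of the second paragraph — tracking which coordinates of $p$ and $z$ enter the stable and unstable \emph{linear} holonomies, and the induced circle translations $t'=h^u_{p,z}(t)$ and $h(t)$ — and, in the last step, keeping the perturbation honest, i.e.\ $C^r$-small (forcing a constant rotation rather than a $t$-dependent one) while the relevant directions $e_\pm$ are merely measurable, which is exactly what necessitates the Fubini-in-the-parameter argument instead of a pointwise one. Everything else reduces to Avila's density theorem and routine measurability.
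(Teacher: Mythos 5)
Your proposal is correct and follows essentially the same route as the paper: density of weak pinching via Avila's theorem, the explicit computation $h(t)=t+\theta_1-\theta_0$ and $H_t=A_0(h(t))^{-1}A_1(t)$, and a perturbation of $A_1$ by a constant rotation to achieve weak twisting. The only difference is in the last measure-theoretic step, where you run a Fubini argument over the rotation parameter to get the twisting condition for $\Leb$-a.e.\ $t$ at once, whereas the paper fixes a single good $t$ (chosen as a density point of a Lusin set where $e_{\pm}$ are continuous), kills the finitely many bad rotation angles at that $t$, and then spreads the condition to a positive-measure set by continuity; both variants are valid.
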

\begin{proof}
By \cite{Av11} we can find a $C^r$-dense set of $A_0$ such that $\lambda_+(A_0)>0$. In other words we have a $C^r$ dense set weakly pinching.

Now observe that in our setting $H_t=A_0(h(t))^{-1} A_1(t)$.
Take $K\subset \bS$ with $\Leb(K)>\frac{1}{2}$ such that $t\mapsto (e_+(t),e_-(t))$ is continuous in $K$.
Thus, since $h$ preserves the Lebesgue measure we have that $\Leb(K\cap h^{-1}(K))>0$. Take $t\in K\cap h^{-1}(K)$ such that $t$ is a density point for the Lesbegue measure. If
$$
H_t\{e_+(t),e_-(t)\}\cap \{e_+(h(t)),e_-(h(t))\}=\emptyset,
$$ 
then, by continuity, there exists a neighborhood of $t$ in $K\cap h^{-1}(K)$, containing a set with positive measure satisfying the same property which implies that the random product is twisting.

If $H_t\{(e_+(t),e_-(t)\}\cap \{e_+(h(t)),e_-(h(t)\}\neq\emptyset$ we change $A_1$ by $\tilde{A}_1=A_1\circ R_\theta$, for $\theta$ small.

This implies the holonomy of $(A_0,\tilde{A}_1)$ is given by $\tilde{H}_t=H_t R_{\theta}$.
Then we can take $\theta$ arbitrarily small such that 
$$
\tilde{H}_t\{e_+(t),e_-(t)\}\cap \{e_+(h(t)),e_-(h(t))\}=\emptyset.
$$
Consequently, the random product $(\theta_i,A_i)$ is weakly twisting concluding the proof.
\end{proof}

\section{Proof of Theorem \ref{Schrodinger case}}
In the case of Schr\"odinger cocycles the perturbation to get weakly simple cocycles is more delicate because we can only perturb $\varphi_i$, $i\in I_k$. So to conclude the Theorem \ref{Schrodinger case}, using \ref{positivity criteria}, we just need to prove the following Proposition.
\begin{proposition}
For $r \in [0,\infty]\cup\{\omega\}$, there exists a dense subset of $(C^r(\bS, \real))^2$ such that, for any $(\varphi_0,\varphi_1)$ on this subset, the random product of the Schrodinger cocyles $(\theta_i, A_{\varphi_i})_{i \in I_k}$, is weakly simple, for any $(\varphi_2,...,\varphi_k) \in (C^r(\bS, \real))^{k-1}$.
\end{proposition}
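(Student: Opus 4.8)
The plan is as follows. Since weak pinching depends only on $\varphi_0$ and weak twisting only on $(\varphi_0,\varphi_1)$, it suffices to produce a $C^r$-dense set $\mathcal{D}\subset(C^r(\bS,\real))^2$ of pairs $(\varphi_0,\varphi_1)$ for which $(\theta_0,A_{\varphi_0})$ is weakly pinching and $(A_{\varphi_0},A_{\varphi_1})$ is weakly twisting; Proposition~\ref{positivity criteria} then yields the statement (for any $\varphi_2,\dots,\varphi_k$). I would first perturb $\varphi_0$ so that it becomes weakly pinching and fix it, and then, keeping $\varphi_0$ frozen, perturb $\varphi_1$ — which in the Schr\"odinger setting can only be done by adding a small real function, i.e. by left-multiplying $A_{\varphi_1}$ by a unipotent matrix — to force weak twisting.

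\emph{Weak pinching.} Here I would invoke the $C^r$-density of positive Lyapunov exponent among Schr\"odinger cocycles over the fixed irrational rotation $\theta_0$ (the analogue, for this restricted class, of the density used via \cite{Av11} in the proof of Theorem~\ref{teo.positive.exp}): any $\varphi_0$ is $C^r$-approximable by one with $\lambda_+(\theta_0,A_{\varphi_0})>0$; concretely, since generically the spectrum of the associated Schr\"odinger operator is nowhere dense, a further small \emph{constant} translation of $\varphi_0$ places the energy $0$ in a spectral gap and makes $(\theta_0,A_{\varphi_0})$ uniformly hyperbolic. Fix such a $\varphi_0$ and let $t\mapsto(e_+(t),e_-(t))\in\proj\times\proj$ be its Oseledets directions, so $A_{\varphi_0}(t)e_\pm(t)=e_\pm(t+\theta_0)$.

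\emph{The twisting perturbation.} Recall $H_t=A_{\varphi_0}(h(t))^{-1}A_{\varphi_1}(t)$, with $h$ built from the rotations alone and hence unaffected by perturbing $\varphi_1$. Since $A_{\varphi_1+\psi}(t)=u(\psi(t))A_{\varphi_1}(t)$ for $u(s)=\bigl(\begin{smallmatrix}1&s\\0&1\end{smallmatrix}\bigr)$, replacing $\varphi_1$ by $\varphi_1+\psi$ turns $H_t$ into $\tilde H_t=A_{\varphi_0}(h(t))^{-1}u(\psi(t))A_{\varphi_1}(t)$, so $\tilde H_t e_\pm(t)\in\{e_+(h(t)),e_-(h(t))\}$ exactly when $u(\psi(t))w_\pm(t)\in\{z_+(t),z_-(t)\}$, where $w_\pm(t)=A_{\varphi_1}(t)e_\pm(t)$ and $z_\pm(t)=A_{\varphi_0}(h(t))e_\pm(h(t))$ do not depend on $\psi$. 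By Lusin's theorem I would fix a compact $K\subset\bS$ with $\Leb(K)>1/2$ on which $e_\pm$ are continuous; as $h$ preserves $\Leb$, $K\cap h^{-1}(K)$ has positive measure. The sets $\{t:e_+(t)=[0:1]\}$ and $\{t:e_-(t)=[0:1]\}$ are $\Leb$-null, since $A_{\varphi_0}(t)[0:1]=[1:0]\neq[0:1]$, so $[0:1]$ is not an invariant direction, and equivariance of the Oseledets sections then forces these sets to be null (for a uniformly hyperbolic $\varphi_0$ they are closed with empty interior, which already suffices). Pick a density point $t_0$ of $K\cap h^{-1}(K)$ with $e_+(t_0),e_-(t_0)\neq[0:1]$, i.e. $w_+(t_0),w_-(t_0)\neq[1:0]$. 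In the affine chart $[m:1]\mapsto m$ of $\proj\setminus\{[1:0]\}$ the unipotent $u(s)$ acts as $m\mapsto m+s$ and fixes $[1:0]$; since $w_\pm(t_0)\neq[1:0]$, for each of the (at most two) points of $\{z_+(t_0),z_-(t_0)\}$ in this chart there is exactly one $s$ with $u(s)w_+(t_0)$ equal to it, and likewise for $w_-(t_0)$, so the bad set $\{s\in\real:u(s)\{w_+(t_0),w_-(t_0)\}\cap\{z_+(t_0),z_-(t_0)\}\neq\emptyset\}$ is finite. Choose $s_0$ arbitrarily small outside it and set $\varphi_1':=\varphi_1+s_0$, a constant perturbation of $C^r$-size $|s_0|$ (the substitute for the rotation $A_1\mapsto A_1R_\theta$ used in the $\SL2$ case). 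Then $\tilde H_{t_0}\{e_+(t_0),e_-(t_0)\}\cap\{e_+(h(t_0)),e_-(h(t_0))\}=\emptyset$, and since $A_{\varphi_0}$, $A_{\varphi_1}$, $h$ and $e_\pm|_K$ are continuous near $t_0$, this disjointness persists on a neighbourhood of $t_0$ in $K\cap h^{-1}(K)$, which has positive measure because $t_0$ is a density point. Hence $(A_{\varphi_0},A_{\varphi_1'})$ is weakly twisting, $(\varphi_0,\varphi_1')\in\mathcal{D}$, and it is $C^r$-close to the original pair.

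\emph{Main obstacle.} The genuinely delicate point, absent in the $\SL2$ case, is that the only admissible perturbations of $A_{\varphi_1}$ are left-multiplications by the unipotents $u(\psi(t))$, all of which fix the line $[1:0]$; consequently weak twisting cannot be created at a point $t_0$ with $[0:1]\in\{e_+(t_0),e_-(t_0)\}$ and $[0:1]\in\{e_+(h(t_0)),e_-(h(t_0))\}$, and the whole argument rests on confining this degenerate locus to a Lebesgue-null set — which I expect to do via equivariance of the invariant sections and the non-invariance of $[0:1]$ and $[1:0]$ under $A_{\varphi_0}$, the latter excluding (through the density of the rotation orbit of an arc) that $\varphi_0$ be forced to a constant incompatible with weak pinching. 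The other ingredient to be supplied is the $C^r$-density of weakly pinching Schr\"odinger cocycles, which in the regularities $C^0$ through $C^\infty$ follows from the genericity of nowhere-dense spectrum together with the constant-translation trick above.
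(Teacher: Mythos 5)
Your overall strategy is the same as the paper's (take $\varphi_0$ weakly pinching by density of positive exponents, then exploit that perturbing $\varphi_1$ acts on the holonomy by a unipotent family, use Lusin plus a density point and the fact that only finitely many parameter values are bad), and your reduction $A_{\varphi_1+\psi}(t)=u(\psi(t))A_{\varphi_1}(t)$, the identification of the degenerate locus with $\{t:\ [0:1]\in\{e_+(t),e_-(t)\}\}$, and the finiteness of the bad set of constants $s$ at a non-degenerate point are all correct. But there is a genuine gap exactly at the point you yourself flag as delicate: your claim that the sets $S_\pm=\{t: e_\pm(t)=[0:1]\}$ are Lebesgue-null is not proved. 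The argument ``$A_{\varphi_0}(t)[0:1]=[1:0]\neq[0:1]$, so equivariance forces these sets to be null'' only shows that $S_\pm$ is disjoint from its translate by $\theta_0$ (if $e_+(t)=[0:1]$ then $e_+(t+\theta_0)=[1:0]$); a measurable equivariant section can perfectly well equal a non-invariant direction on a set of measure up to $1/2$, so no nullity follows. The parenthetical fallback for uniformly hyperbolic $\varphi_0$ does not close this either: ``closed with empty interior'' does not imply null, and even the empty-interior claim is not justified (if $e_+\equiv[0:1]$ on an arc, the one-step equivariance gives no contradiction). This matters quantitatively in your argument because you only take $\Leb(K)>1/2$: that guarantees $\Leb(K\cap h^{-1}(K))>0$, but without nullity of $S_+\cup S_-$ it does not guarantee a positive-measure set of points of $K\cap h^{-1}(K)$ avoiding the degenerate locus, hence no admissible density point $t_0$.

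The paper proves only the weaker statement that is actually needed, and compensates in the Lusin step. Writing $L_2=\{t:\ [0:1]\in\{e_+(t),e_-(t)\}\}$, it shows $\Leb(\bS\setminus L_2)>0$: if $[0:1]$ were an Oseledets direction for a.e.\ $t$, then iterating the equivariance twice forces $A_{\varphi_0}(f_{\theta_0}(t))A_{\varphi_0}(t)[0:1]=[-\varphi_0(\cdot):1]$ to be vertical a.e., i.e.\ $\varphi_0\equiv 0$, which is excluded because weak pinching (after the density reduction) allows one to assume $\varphi_0\not\equiv 0$ (indeed $\varphi_0\equiv0$ gives a rotation-valued cocycle with zero exponent). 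Then, with $\delta=\Leb(\bS\setminus L_2)$, the Lusin set $\Gamma$ is taken with $\Leb(\Gamma)>\frac{2-\delta}{2}$, so that $\Leb(\Gamma\cap h^{-1}(\Gamma))>1-\delta$ and hence $\Gamma\cap h^{-1}(\Gamma)\cap(\bS\setminus L_2)$ has positive measure; a density point of this set plays the role of your $t_0$. To repair your proof, replace the nullity claim by this positive-measure argument and adjust the size of your Lusin set accordingly; the rest of your perturbation scheme then goes through. A secondary remark: for weak pinching it is cleaner to quote the density of positive exponents for Schr\"odinger cocycles from \cite{Av11}, as the paper does; the route via generic Cantor spectrum and placing $0$ in a gap invokes results that are not available uniformly in all the regularities $r\in[0,\infty]\cup\{\omega\}$ considered here, and the uniform hyperbolicity it would provide is not needed.
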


\begin{proof}
In \cite{Av11}, it is proved that for any $r \in [0,\infty]$ there is a dense subset of maps $\varphi: \bS\rightarrow \real$ in $C^r(\bS, \real)$ such that the cocycle $(\theta_0,A_{\varphi})$ has positive Lyapunov exponent, which in our case is equivalent to say that for a dense subset of $\varphi_0 \in C^r(\bS,\real)$, the random product of $(\theta_i,A_{\varphi_i})$, with $i = 0,...,k$, is weakly pinching.

Take a point $(\varphi_0,...,\varphi_k) \in (C^r(\bS, \real))^{k+1}$ such that the random product of $(\theta_i, A_{\varphi})_{i \in I_k}$ is weakly pinching and assume, without loss of generality, that $\varphi_0$ does not vanish identically (this can be made since we have density of weakly pinching). Consider $p, z, h$ and $H_t$ as in Section~\ref{s.preliminary} and observe that
\begin{equation}\label{eq.Ht}
H_t = (A_{\varphi_0}(h(t)))^{-1}A_{\varphi_1}(t) = 
\left(
\begin{array}{cc}
 ×1                             &       0\\
 \varphi_0(h(t)) - \varphi_1(t)    &       1\\
 
\end{array}
\right),
\end{equation}
and
$$
h(t) = h_{z,p}^s\circ h_{p,z}^u(t) = t + (\theta_1 - \theta_0)
$$
is a rotation and, in particular, preserves the Lebesgue measure. Observe that the matrix $H_t$ preserve the vertical axis, i.e.  $e_2 = (0,1)$ is the unique fixed point of the action of the matrix $H_t$ in the projective space when $\varphi_1(t)\neq \varphi_0(h(t))$.

With a small change of $\varphi_1$ in the $C^r$ topology we can assume that $\varphi_0(h(t)) \neq \varphi_1(t)$ for $\Leb$-a.e. $t \in \bS$ (for example adding a suitable constant to $\varphi_1$). Hence, $e_2$ is the unique direction in the projective space which is invariant by $H_t$. 

Consider the following set
$$
L_2 = \{t \in \bS; e_2 \in \{e_+(t),e_{-}(t)\}\},
$$
where $e_{+}$ and $e_{-}$ are the Oseledets subspaces associated to the cocycle $(\theta_0,A_{\varphi_0})$ (which we know that has positive Lyapunov exponent).

We claim that $\Leb(\bS\backslash L_2) > 0$. Indeed, otherwise, we have that $e_2$ is a Oseledets subspace of the cocycle $(\theta_0, A_{\varphi_0})$ for $\Leb$-a.e. $t \in \bS$. Then, we have the following possibilities:

\begin{enumerate}
 \item $e_2 = e_+(t) = e_+(f_{\theta_0}(t))$, for some $t\in \bS$:

In this case we have,
$$
e_2 = e_+(f_{\theta_0}(t)) = A_{\varphi_0}(t)e_+(t) = (-1,0),
$$
which is a contradiction.

\item $e_2 = e_+(t) = e_{-}(f_{\theta_0}(t))$, for $\Leb$-a.e. $t \in \bS$:

We have,
$$
e_2 = e_+(f^2_{\theta_0}(t)) = A_{\varphi_0}(f_{\theta_0}(t))A_{\varphi_0}(t)e_+(t) = (-\varphi_0(t), 1),
$$
\end{enumerate}
for $\Leb$-a.e. $t \in \bS$. This shows that $\varphi_0$ vanishes identically, a contradiction.

Therefore, let $\delta = \Leb(\bS\backslash L_2) \in (0,1)$ (the case $\delta=1$ is simpler and follows analogously). By Lusin's theorem, there exists $\Gamma \subset \bS$, such that the functions $e_+$ and $e_{-}$ are simultaneously continuous in $\Gamma$ and $\Leb(\Gamma) > \frac{2 - \delta}{2}$.

Observe that the condition on the measure of $\Gamma$ and the fact that $h$ preserves the Lebesgue measure shows that,
$$
\Leb(\Gamma\cap h^{-1}(\Gamma)) > 1-\delta.
$$
Then,
$$
\Leb(\Gamma\cap h^{-1}(\Gamma)\cap(\bS\backslash L_2)) > 0.
$$

Let $t \in \Gamma\cap h^{-1}(\Gamma)\cap(\bS\backslash L_2)$ be a density point for the Lebesgue measure. Since $e_2 \notin \{e_+(t),e_{-}(t)\}$ we have $e_2 \notin H_t(\{e_+(t),e_{-}(t)\})$. Assume that 
$$
H_t(\{e_+(t),e_{-}(t)\})\cap \{e_+(h(t)),e_{-}(h(t))\} \neq \varnothing.
$$
Then, changing $\varphi_1$ in a small neighborhood of $t$ and using \eqref{eq.Ht} we can make
$$
H_t(\{e_+(t),e_{-}(t)\})\cap \{e_+(h(t)),e_{-}(h(t))\} = \varnothing.
$$
Since $t$ is a density point, $H_t$, $e_+$ and $e_{-}$ are continuous in $t$, we have that the above property is preserved for a positive measure neighborhood of $t$.

So we conclude that, fixed $\varphi_0 \in C^r(\bS,\real)$ such that the cocycle $(\theta_0,A_{\varphi_0})$ has positive Lyapunov exponent, there exists a dense subset of $\varphi_1\in C^r(\bS, \real)$ such that the random product of $(\theta_i, \varphi_i)$ is weakly twisting, for any $\varphi_2,...,\varphi_k \in C^r(\bS, \real)$ and so, weakly simple.
\end{proof}

\section{Proof of the theorem \ref{Main Theorem 1}  }\label{s.proofA}

From now on our cocycles take values on $\GL$, for $d>2$. As always, let $p, z, h$ and $H_t$ be as in section \ref{s.preliminary}.

\begin{definition}[Pinching]
 We say that the random product $(\hf, \hA)$ is \emph{Pinching} if the Lyapunov exponents $\lambda_1(p),\cdots,\lambda_l(p)$ of the cocycle $(f_{0}, A_{0})$  satisfies that $l = d$ and for any $1\leq j\leq d-1$ and $\Leb$-a.e. $t\in \bS$ the sums
\begin{eqnarray}\label{pinching condition}
 \lambda_{i_1}(p)+\cdots+\lambda_{i_j}(p), 
\end{eqnarray}
for all sequences $1\leq i_1<...< i_j \leq d $, are distincts.
\end{definition}

Take $I$ and $J$ subsets of $\{1,\cdots,d\}$ with the same cardinality and consider  the map $P_{I,J}:\GL\rightarrow\real$ defined as the determinant of the matrix obtained taking the minor associated with the lines designated by the elements in $I$ and columns designated by elements of $J$.

\begin{definition}[Twisting]
 We say that the cocycle $(\hf,\hA)$ is \emph{Twisting} if for any $I$ and $J$ as above
 \begin{eqnarray*}
  \int_{\bS}\abs{\log \abs{P_{I,J}(H_t)}}dt < \infty.
 \end{eqnarray*}
 \end{definition}

We say that the random product $(\hf,\hA)$ is \emph{Simple} if it is both Pinching and Twisting. Observe that even if $d=2$ being simple is a stronger condition than being weakly simple.

In order to prove Theorem \ref{Main Theorem 1} we need the following result which is a version of the main result of \cite{PoV18}.
 
\begin{theorem}\label{Continuity condition}
If the cocycle $(\hf,\hA)$ is simple, then the Lyapunov spectrum is simple and it is a continuity point with respect to the $C^0$ topology of the Lyapunov exponents.
\end{theorem}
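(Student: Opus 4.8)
The plan is to deduce Theorem~\ref{Continuity condition} from the analogous statement in \cite{PoV18} (respectively from the criterion in \cite{AvV1}) by checking that our setting fits into the framework there, up to the mild issue that the base dynamics $(\hf,\hmu)$ is not uniformly hyperbolic but a skew product with a hyperbolic (shift) factor and an isometric (rotation) factor. Concretely, the random product $(\hf,\hA)$ over $\hX=X\times\bS$ admits stable and unstable holonomies $H^{s,\hA}$, $H^{u,\hA}$ as constructed in Section~\ref{s.preliminary}, and these depend continuously on $\hA$ in the $C^0$ topology; moreover the local product structure of $\hX$ is given by the product of stable and unstable sets together with the rotation coordinate, so all the holonomy combinatorics used in \cite{PoV18} go through verbatim. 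Thus the first step is to state precisely the fibered version of the simplicity criterion: for a cocycle over a system with local product structure and invariant holonomies, if there is a \emph{pinching} periodic orbit and a \emph{twisting} homoclinic point, then the Lyapunov spectrum is simple and the exponents vary continuously.

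Second, I would translate our Pinching and Twisting hypotheses into the hypotheses of that criterion. The fixed point $p\in X$ of the shift gives, via $(\theta_0,A_0)$, a fibered ``periodic'' piece; but here the fiber over $p$ is the whole circle $\bS$ with the rotation $f_0$, so instead of a single pinching matrix we get the cocycle $(f_0,A_0)$ over the irrational rotation, whose Lyapunov spectrum is, by hypothesis, simple with all partial sums $\lambda_{i_1}+\cdots+\lambda_{i_j}$ distinct. The Oseledets decomposition over $\bS$ plays the role of the eigenspace flag of the pinching matrix. The homoclinic point $z$ (with $z_0=1$, $z_i=0$ otherwise) together with the holonomy composition $H^{\hA}_t$ defined in \eqref{eq.hH} plays the role of the twisting element: the Twisting condition $\int_{\bS}\abs{\log\abs{P_{I,J}(H_t)}}\,dt<\infty$ for all $I,J$ of equal cardinality says exactly that $H_t$ is in ``general position'' with respect to the Oseledets flags of $(f_0,A_0)$ for a.e.\ $t$, which is the genericity hypothesis needed to separate all the exponents. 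So the key step here is the bookkeeping lemma: simplicity of the spectrum of $(f_0,A_0)$ plus the integrability/genericity of the minors of $H_t$ implies, for every $1\le j\le d-1$, that the $j$-th exterior power cocycle $\Lambda^j \hF_{\hA}$ is ``weakly simple'' in the sense that it has a dominated direction, hence $\lambda_j(\hA)>\lambda_{j+1}(\hA)$.

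Third, for the continuity statement I would run the same invariance-principle argument as in Proposition~\ref{positivity criteria}, but on each exterior power $\Lambda^j\real^d$. If $\lambda_j(\hA_k)\to a\neq \lambda_j(\hA)$ along some sequence $\hA_k\to\hA$ in $C^0$, pass to weak-$*$ limits of $\Lambda^j\hF_{\hA_k}$-invariant, $u$-invariant measures $m_k$ whose integral computes $\lambda_1+\cdots+\lambda_j$; the limit $m$ is $\Lambda^j\hF_{\hA}$-invariant and, by \cite[Theorem~A.1]{Pol18}, still $u$-invariant, and if the top exponent of $\Lambda^j\hF_{\hA}$ were not simple the invariance principle would force $m$ to be $su$-invariant, contradicting the Twisting condition (which, just as weakly simple did in Section~4, rules out $su$-invariant measures for every exterior power). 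The same contradiction argument prevents a sequence of discontinuity points accumulating on $\hA$. Assembling the $d-1$ exterior powers gives both simplicity and $C^0$-continuity of the full Lyapunov spectrum.

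The main obstacle I expect is the adaptation of the ``pinching at a periodic orbit'' part of \cite{PoV18}: there the periodic orbit is a genuine finite orbit and pinching is a finite-dimensional linear-algebra condition on one matrix, whereas here the role of the periodic orbit is played by the rotation-fiber cocycle $(f_0,A_0)$ over the whole circle, so ``pinching'' becomes an almost-everywhere statement about the Oseledets flag, and one has to be careful that the density-point / positive-measure arguments (used exactly as in Sections~4 and~5 to locate a good $t\in K\cap h^{-1}(K)$) interact correctly with the integrability condition in Twisting so that the exterior-power invariant measures genuinely see the separation of exponents on a positive-measure set rather than only pointwise. Checking that \cite[Theorem~A.1]{Pol18} and the invariance principle of \cite{Extremal}/\cite[Theorem~6.2]{Pol18} apply to the exterior power cocycles over $(\hf,\hmu)$ — i.e.\ that these cocycles still have continuously varying holonomies and the base still has the requisite structure — is routine but must be spelled out.
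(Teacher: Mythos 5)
Your proposal is correct and follows essentially the same route as the paper: the paper proves Theorem~\ref{Continuity condition} simply by invoking the main result of \cite{PoV18} and observing, exactly as you do in your first step, that although that result is stated for H\"older cocycles over more general hyperbolic-like dynamics, the only structural input needed is the existence of stable/unstable holonomies that vary continuously with the cocycle in the $C^0$ topology, which the locally constant random-product structure of Section~\ref{s.preliminary} provides. Your further sketch (exterior powers, invariance principle, translation of Pinching/Twisting into the fibered criterion) is a reconstruction of the internal machinery of \cite{PoV18} rather than a different argument, and the paper itself does not spell any of it out.
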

In \cite{PoV18} the result is stated for H\"older cocycles with some more general dynamics, but as mentioned in \cite[section~4.1]{PoV18} we only need to have well defined holonomies that varies continuously with respect to the cocycle.

Observe $A_0:\bS\to \Diag$ is defined by $d$ functions $a_1,\dots,a_d:\bS\to \real$ such that $(A_0(t))_{i,i}=a_i(t)$ and $(A_0(t))_{i,j}=0$ for $i\neq j$. Then, by Birkhoff's ergodic theorem, the Lyapunov spectrum of $(f_0,A_0)$ is the set
$$\left\{\int \log (a_i)d\Leb
\right\}.$$
So, after suitable choose  of $b_i>0$, we can define $\tilde{a}_i:\bS\to \real$ given by $\tilde{a}_i(t)=b_i a_i(t)$ such that the  diagonal cocycle $\tilde{A}_0$ defined using $\tilde{a}$ has the property \ref{pinching condition}. Moreover, this is a $C^0$ open condition.

Hence after a $C^r$ small perturbation of $A_0$ we can assume that the random product $(\hf, \hA)$ of $(A_0,A_1,...,A_k)$ is always pinching for any $(A_1,...,A_k) \in C^r(\bS,\GL)^k$, and $r\in [0,\infty]\cup\{\omega\}$. 

If $(\hf,\hA)$ is twisting, then, by Theorem \ref{Continuity condition} we have that $(\hf,\hA)$ has simple Lyapunov spectrum and is a $C^0$-continuity point of all Lyapunov exponents. In particular, any $(B_0,...,B_k) \in (C^r(\bS, \GL))^{k+1}$ which is $C^0$-close to $(A_0,...,A_k)$ has also simple Lyapunov spectrum.

So, to conclude the proof of the theorem \ref{Main Theorem 1} it is enough to prove the following theorem:

\begin{theorem}\label{dense twisting}
 Let $d > 2$, $r \in [0,\infty]\cup\{\omega\}$. Then the set of maps $A_1 \in C^r(\bS, \GL)$, such that the random product $(\hf,\hA)$ of $(\theta_i,A_i)_{i\in I_k}$ is twisting, is $C^r$ dense. Moreover for $r \in [1,\infty]\cup\{\omega\}$ this set is also $C^1$ open.
\end{theorem}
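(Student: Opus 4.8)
The plan is to compute $H_t$ explicitly, reduce the twisting property to a condition on $A_1$ alone, and then produce a $C^r$-dense set of such $A_1$ which (for $r\ge 1$) is moreover $C^1$-open.

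\emph{Reduction.} First I would evaluate the holonomies at the distinguished points $p,z$ of Section~\ref{s.preliminary}. Since $p$ and $z$ agree in every coordinate except the $0$-th, the limits defining $h^u_{p,z}$ and $H^{u,\hA}_{(p,t)(z,t)}$ stabilize at once at the identity, $h(t)=t+(\theta_1-\theta_0)$ is a rotation, and $H^{s,\hA}_{(z,t)(p,h(t))}=A_0(h(t))^{-1}A_1(t)$; hence $H_t=A_0(h(t))^{-1}A_1(t)$. Writing $A_0=\mathrm{diag}(a_1,\dots,a_d)$ with each $a_i\in C^r(\bS,\real)$ nowhere zero (so $\log\abs{a_i}$ is bounded), and using that $A_0(h(t))^{-1}$ is diagonal, one gets $P_{I,J}(H_t)=\big(\prod_{i\in I}a_i(h(t))\big)^{-1}P_{I,J}(A_1(t))$ for all $I,J$ with $\abs I=\abs J$; as $h$ preserves $\Leb$, the first factor contributes a bounded term to $\log\abs{P_{I,J}(H_t)}$. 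Thus $(\hf,\hA)$ is twisting if and only if $\log\abs{P_{I,J}(A_1(\cdot))}\in L^1(\bS)$ for all such $I,J$, a condition on $A_1$ alone ($A_2,\dots,A_k$ play no role), which is why it suffices to perturb $A_1$.

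\emph{A robust sufficient condition.} I would next isolate a stronger but open condition. If $g\in C^1(\bS,\real)$ has $0$ as a regular value, then $g$ has finitely many zeros and $\abs{g(t)}\ge c\,\mathrm{dist}(t,g^{-1}(0))$ near them, so $\log\abs g\in L^1$; and this property is $C^1$-open, since away from the finitely many zeros $\abs g$ is bounded below, while each transverse zero persists and no new zero appears under a $C^1$-small perturbation. Since the $P_{I,J}$ are polynomials and $\GL$ is open, the set $\mathcal{T}$ of $A_1\in C^r(\bS,\GL)$ for which $0$ is a regular value of every minor function $t\mapsto P_{I,J}(A_1(t))$ is $C^1$-open for $r\ge 1$ and consists of twisting cocycles. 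Hence for $r\in[1,\infty]$ it suffices to prove that $\mathcal T$ is $C^r$-dense. (The point of working with $\mathcal{T}$ rather than directly with the log-integrability condition is precisely that $\mathcal{T}$ is manifestly $C^1$-open.)

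\emph{Density via transversality ($r\in[1,\infty]$).} For each fixed pair $(I,J)$ I would apply a parametric transversality argument to the $1$-jet evaluation $(t,A_1)\mapsto\big(P_{I,J}(A_1(t)),\tfrac{d}{dt}P_{I,J}(A_1(t))\big)\in\real^2$: near any $t_0$ with $P_{I,J}(A_1(t_0))=0$ one can add to $A_1$ a $C^r$ bump supported near $t_0$ times a suitable constant matrix so as to prescribe independently the value and the derivative of $P_{I,J}(A_1(\cdot))$ at $t_0$, staying in $C^r(\bS,\GL)$; this shows the relevant evaluation maps are submersive. Since $\{(0,0)\}$ has codimension $2$ in the jet space while $\bS$ has dimension $1$, a $C^r$-residual set of $A_1$ avoids it for $(I,J)$; intersecting over the finitely many pairs gives a $C^r$-residual, hence dense, subset of $\mathcal T$. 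The step I expect to be the main obstacle is exactly this one: checking that perturbations inside the infinite-dimensional $C^r(\bS,\GL)$ genuinely realize the needed transversality for all minors simultaneously, i.e.\ running the Thom transversality / Baire-category machinery carefully with the localized perturbations (and keeping $A_1$ in $\GL$ throughout, which is automatic since $\GL$ is open).

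\emph{The analytic and $C^0$ cases.} In the analytic category bumps are unavailable, so instead I would perturb $A_1$ to $C_1A_1(\cdot)C_2$ with constant $C_1,C_2\in\GL$ near the identity. By Cauchy–Binet, $P_{I,J}(C_1A_1(t)C_2)=\sum_{K,L}P_{I,K}(C_1)P_{K,L}(A_1(t))P_{L,J}(C_2)$; the polynomials $P_{I,K}(C_1)P_{L,J}(C_2)$ in $(C_1,C_2)$ are linearly independent, and since $A_1(t)\in\GL$ has full rank $d\ge\abs I$ at least one minor $P_{K,L}(A_1(\cdot))$ is not identically zero. Hence for each $(I,J)$ the bad set $\{(C_1,C_2):P_{I,J}(C_1A_1(\cdot)C_2)\equiv 0\}$ is a proper Zariski-closed subset of $\GL\times\GL$, so a generic $(C_1,C_2)$ arbitrarily close to $(\id,\id)$ makes every minor of the perturbation a nonzero real-analytic function, which is automatically log-integrable; thus the perturbation is twisting, and the condition ``all minors $\not\equiv 0$'' is $C^0$-open among analytic maps, yielding the $C^1$-openness for $r=\omega$. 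For $r=0$, one first $C^0$-approximates $A_1$ by an analytic map (density of analytic functions in $C^0$) and then applies the same constant-matrix perturbation.
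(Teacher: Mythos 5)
Your reduction (using that $A_0$ is diagonal to get $\abs{P_{I,J}(H_t)}=\bigl(\prod_{i\in I}\abs{a_i(h(t))}\bigr)^{-1}\abs{P_{I,J}(A_1(t))}$, so that twisting becomes log-integrability of the minors of $A_1$ alone), the $C^1$-openness of your sufficient condition, the analytic case via Cauchy--Binet with constant $C_1,C_2$ (a genuinely different and valid route from the paper's pointwise constant perturbation), and the reduction of $r\in[0,1)$ to the smooth case are all fine. The gap is exactly where you anticipated it: the submersivity claim in the density step for $r\in[1,\infty]$. The derivative of the value component of your evaluation map with respect to a perturbation $H$ of $A_1$ is $\nabla P_{I,J}(A_1(t_0))\cdot H(t_0)$, and with respect to $t$ it is $\nabla P_{I,J}(A_1(t_0))\cdot A_1'(t_0)$; both vanish whenever $\nabla P_{I,J}(A_1(t_0))=0$, so at such points no bump-times-constant-matrix perturbation prescribes the value of $P_{I,J}(A_1(\cdot))$ at $t_0$ even to first order, and the evaluation map is not a submersion at preimages of $(0,0)$. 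The gradient of a $k\times k$ minor vanishes exactly when the corresponding $k\times k$ submatrix has rank at most $k-2$, and this does happen on $\GL$ as soon as $d\ge 4$ and $2\le k\le d-2$ (e.g.\ $d=4$, $I=J=\{1,2\}$, upper-left $2\times 2$ block equal to zero, which is compatible with invertibility). So for $d\ge 4$ the parametric transversality theorem cannot be invoked as stated; your argument does go through for $d=3$, where all minors have nonvanishing gradient on $\GL$, but the theorem covers all $d>2$.

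To close the gap you must first show that generically $A_1(\bS)$ avoids the degenerate locus $\{M\in\GL:\ P_{I,J}(M)=0,\ \nabla P_{I,J}(M)=0\}$ (a determinantal variety, of codimension at least $2$ when nonempty), and only then run the transversality-to-a-hypersurface argument on its complement. Making this rigorous is precisely the content of the paper's proof of its technical proposition: it stratifies the algebraic set $[P=0]$ by its iterated singular sets $V^{i}=\mathrm{Sing}(V^{i-1})$, uses the bound $\dim \mathrm{Sing}(V)<\dim V$ to get a finite chain of strata of codimension at least $2$ above the regular part, applies Thom transversality stratum by stratum to make $A_1(\bS)$ miss them and be transverse to the regular part of $[P=0]$, and then does the Taylor-expansion estimate at the finitely many nondegenerate zeros. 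Either import that stratification argument, or supply an explicit codimension count and avoidance argument for the rank-deficiency locus of each minor; without one of these, the key density step is not proved.
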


\begin{proof} Assume first that $r \in [1,\infty]$. Consider $A_1 \in C^r(\bS, \GL)$ and denote by $(\hf,\hA)$ the random product of $(\theta_i, A_i)$.

Note that, for $p,z \in X$ as in Section \ref{s.preliminary},
$$
h^s_{z,p}(t) = f^{-1}_0\circ f_1\ \ \mbox{and}\ \  h^u_{p,z} = I.
$$
So, $h = h^s_{z,p}\circ h^u_{p,z} = f^{-1}_0\circ f_1.$ Moreover, since $t = h^u_{p,z}(t)$ and observing that

$$
H^s_{(z,t)(p,h(t))} = A_0(h(t))^{-1}A_1(t)\ \ \mbox{and}\ \ H^u_{(p,t)(z,t)} = I,
$$
we get that 
$$
H_t = H^s_{(z,t)(p,h(t))}\circ H^u_{(p,t)(z,t)} = A_0(h(t))^{-1}A_1(t) \in C^r(\bS, \GL).
$$

Therefore, to see that a random product $(\hf, \hA)$ is twisting, we must show that for any $I, J \subset \{1,...,d\}$ with same cardinality the map $P_{I,J}: \GL\rightarrow \real$ satisfies

\begin{eqnarray}\label{main property}
 \int_{\bS}\left|\log \left|P_{I,J}(A_0(h(t))^{-1}A_1(t))\right|\right|dt < \infty.
\end{eqnarray}

The next proposition (which will be proved in Section \ref{s.technical}) is the technical tool in the proof that property \eqref{main property} above holds for $A_0(h(t))^{-1}A_1(t)$ in a dense subset of $C^r(\bS, \GL)$ and we will state it for more general polynomial maps (polynomials with the variable being the coordinates of the matrix in $\GL$), than $P_{I,J}$.

\begin{proposition}\label{Technical tool}
Let $P: \GL \rightarrow \real$ be a non-constant polynomial map, $d \geq 1$ and $r \in [1,\infty]\cup\{\omega\}$. Then, the set
\begin{eqnarray*}
 \cA_P = \left\{ A \in C^r(\bS, \GL); \log\left|P\circ A \right| \in L^1(\bS,m) \right\},
\end{eqnarray*}
is open and dense subset of $C^r(\bS, \GL)$.
\end{proposition}

Using the Proposition \ref{Technical tool} and the continuity of the invertible map $\phi_{A_0}:C^r(\bS, \GL)\rightarrow C^r(\bS, \GL)$ given by
$$
\phi_{A_0}(A)(t) = A_0(h(t))^{-1}A(t),
$$
we can see that the set of $A_1 \in C^r(\bS, \GL)$ such that the random product $(\hf,\hA)$ is twisting, that is,
$$
\left|P_{I,J}\circ (A_0(h(t))^{-1}A(t))\right| \in L^1(\bS, \GL),
$$
for all maps $P_{I,J}$ (there are only finite of them) is open and dense. Hence, we conclude the theorem for $r \in [1,\infty]\cup\{\omega\}$. 

Assume now that $r \in [0,1)$. Then, we can approximate $A_1$ by $B_1 \in C^1(\bS,\GL)$ in the $C^r$ topology and, after that, using the Proposition \ref{Technical tool} again, find $D_1$ close to $B_1$ in the $C^1$ topology (and then close to $A_1$ in the $C^r$ topology) such that the random product of $(\theta_i, D_i)_{i \in I^k}$ is twisting, where $D_j = A_j$ for all $j \neq 1$. This concludes the result for any $r \in [0,\infty]\cup\{\omega\}$.

\end{proof}

\section{Proof of the Proposition \ref{Technical tool}}\label{s.technical}

First we will give a brief review about basic real algebraic geometry. A subset $V\subset\real^l$ is said to be an \emph{Algebraic set} if there exist finitely many polynomials $f_1,...,f_m \in \real[X_1,...,X_l]$ such that

$$
V = \bigcap_{i=1}^m[f_i = 0].
$$

An algebraic set $V$ is said irreducible if, whenever $V = F_1\cup F_2$, with $F_i$ algebraic sets, then $V = F_1$ or $V= F_2$. It is known, see \cite[Theorem~2.8.3,page~50]{BoCoRo2013}, that every algebraic set $V$ can be written as the union of irreducible algebraic sets $V_1,...,V_p$ such that $V_i \nsubseteq\cup_{j\neq i}V_j$. The sets $V_i$ are called the irreducible components of $V$.

For a point $x_0 \in V$, we define the \emph{Zariski tangent space} of $V$ at the point $x_0$ as the linear space
$$
T^{Zar}_{x_0}V = \bigcap_{i=1}^m\left\{x \in \real^l; \nabla f_i(x_0) \cdot x = 0 \right\},
$$
where $\nabla f_i$ denotes the gradient vector of the polynomial $f_i$.

We say that a point $x_0 \in V$ is a \emph{regular} point if
$$
\text{dim}(T^{Zar}_{x_0}V) = \text{min}\left\{ \text{dim}(T^{Zar}_{x}V); x \in V \right\},
$$
and $x_0$ is a \emph{singular} point of $V$ if it is not regular.

If $V_0$ is an irreducible algebraic set we define the dimension of $V_0$ as the number dim$(T^{Zar}_xV)$ for any regular point $x \in V$. For general algebraic sets $V$ we define the dimension as
$$
\text{dim}V = \text{max}\{\text{dim}(V_i); i =1,...,p\},
$$
where $V_i$ are the irreducible components of $V$.

It is important also to reinforce that, when $V$ is an irreducible algebraic set and $x \in V$ is a regular point of $V$, there exists a neighborhood of $x$ in $V$ which is a $C^{\infty}$ manifold and in this case $T^{Zar}_{x}V$ is in fact the tangent space $T_xV$ of this neighborhood at $x$, see \cite[page~66]{BoCoRo2013}.

Let Sing$(V)$ be the set of singular points of $V$ (sometimes we will call this set singular part of $V$ and its complement as regular part of $V$). It is also known, see \cite[Proposition 3.3.14, page~69]{BoCoRo2013}, that Sing$(V)$ is an algebraic subset of $V$ and
\begin{align}
\text{dim}(\text{Sing}(V)) < \text{dim}V.
\end{align}
That will be the crucial property in the proof of the Proposition \ref{Technical tool}.

For more details about algebraic and semi algebraic sets, see \cite{BoCoRo2013}.

Consider $P:\GL\rightarrow \real$ a non-constant polynomial map. Fix $r \in [1,\infty]$ (we will deal with the case $r = \omega$ later) and consider the following algebraic set,
$$
V^0 = [P=0].
$$

Observe that $V^0$ has dimension at most $d^2-1$ (as defined above) and then its regular part is a regular submanifold of $\GL$ of dimension equal to dimension of $V^0$.

Let $V^1 = \mbox{Sing}(V^0) \subset V^0$, be the singular part of $V^0$.

An important tool that we will use is the Thom transversality theorem, that says:

\begin{theorem}[Thom transversality theorem]\label{TTT}
Fix $r \in [1,\infty]$. Let $M$ be a manifold and $N \subset M$ be a submanifold. Then, the set
$$
\{A \in C^r(\bS; M);\ A\pitchfork N\},
$$
is dense. If $N$ is closed then the above set is also open.
\end{theorem}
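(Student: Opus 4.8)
The plan is to reduce everything to the behaviour of $P\circ A$ near its zero set, which is controlled by the real-algebraic geometry of $V^0:=[P=0]\cap\GL$. First note that $P$ cannot vanish identically on $\GL$: otherwise, since $\GL$ is Zariski dense in $\real^{d^2}$, $P$ would be the zero polynomial, contradicting non-constancy; hence $V^0$ is a proper closed algebraic subset of $\GL$ with $\dim V^0\le d^2-1$, and the only obstruction to $\log|P\circ A|\in L^1$ is the behaviour of the $C^r$ (resp.\ analytic) function $P\circ A$ near $A^{-1}(V^0)$. For $r\in[1,\infty]$ I would: (i) stratify $V^0$ into finitely many smooth pieces, all of codimension $\ge 2$ in $\GL$ except possibly one of codimension $1$, along each of which $P$ vanishes to a constant finite order; (ii) use the Thom transversality theorem to produce a $C^r$-open $C^r$-dense set $\mathcal{T}$ of maps transverse to every piece; (iii) show $\mathcal{T}\subseteq\cA_P$ by a logarithmic estimate. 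The case $r=\omega$, where the transversality theorem is unavailable, I would treat directly.

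For (i): give $V^0$ its reduced structure, set $V^1=\operatorname{Sing}(V^0)$ so that $\dim V^1<\dim V^0$ by \cite[Proposition 3.3.14]{BoCoRo2013}, and let $S_0=V^0\setminus V^1$, a smooth submanifold of $\GL$. After enlarging $V^1$ by the lower-dimensional semialgebraic locus of points of $S_0$ where the vanishing order of $P$ along $V^0$ exceeds its generic value $a_0$, one has near every $x\in S_0$ a local equation $g$ of $V^0$ with $\nabla g\neq 0$ and a nonvanishing function $u$ with $P=g^{a_0}u$. Iterating along $V^0\supset V^1\supset\operatorname{Sing}(V^1)\supset\cdots$, whose dimensions strictly decrease, terminates and yields $V^0=\bigsqcup_i S_i$ with each $S_i$ smooth and — since $\dim V^1\le d^2-2$ — of codimension $\ge 2$ unless $i$ is the index of $S_0$. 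For (ii): write each $S_i$ as an increasing union of relatively compact open submanifolds and apply Theorem~\ref{TTT} to each; this makes $\{A:A\pitchfork S_i\}$ residual (in particular dense) in the Baire space $C^r(\bS,\GL)$, so the finite intersection $\mathcal{T}$ is residual, hence dense. For openness, note that $A\pitchfork S_i$ with $\operatorname{codim}S_i\ge 2$ forces $A(\bS)\cap S_i=\varnothing$ because $\dim\bS=1$; hence every $A\in\mathcal{T}$ satisfies $A(\bS)\subseteq U:=\GL\setminus V^1$, and $\mathcal{T}=\{A:A(\bS)\subseteq U,\ A\pitchfork S_0\}$. The first condition is open, and since $S_0=V^0\cap U$ is closed in $U$, the second is open by the closed-submanifold clause of Theorem~\ref{TTT} applied with ambient manifold $U$; so $\mathcal{T}$ is $C^r$-open.

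For (iii): if $A\in\mathcal{T}$, then since $A$ avoids $V^1$ the zero set of $P\circ A$ equals $A^{-1}(S_0)$, which by transversality and compactness of $\bS$ is finite; at each of its points $t_j$ the relevant stratum has codimension $1$ (else the transverse intersection would be empty), so $P=g^{a_0}u$ near $x_j=A(t_j)$ with $u$ bounded away from $0$, while transversality to $S_0=\{g=0\}$ means $(g\circ A)'(t_j)\neq 0$, whence $|g\circ A(t)|\asymp|t-t_j|$ and
$$
\log|P\circ A(t)|=a_0\log|t-t_j|+O(1)\quad\text{as }t\to t_j,
$$
which is integrable; away from the $t_j$ the function $\log|P\circ A|$ is bounded. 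Thus $\mathcal{T}\subseteq\cA_P$, so $\cA_P$ is $C^r$-dense and contains the $C^r$-open $C^r$-dense set $\mathcal{T}$. For $r=\omega$: $P\circ A$ is real-analytic, hence either $\equiv 0$ (which happens exactly when $A(\bS)\subseteq V^0$) or with isolated zeros of finite order, and in the latter case the displayed estimate again gives integrability; so $\cA_P\cap C^\omega=\{A:P\circ A\not\equiv 0\}$, which is $C^0$-open (hence $C^\omega$-open) because $P(A(t_0))\neq 0$ for a single $t_0$ is an open condition, and dense because an analytic perturbation of the form $A_0(t)\exp((\varepsilon\cos 2\pi t)X)$ pushes $A_0(t_0)$ into the open dense set $\GL\setminus V^0$ for suitable $X$ and small $\varepsilon$.

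The hard part will be step (i): building, from nothing beyond the inequality $\dim\operatorname{Sing}(V)<\dim V$ and standard semialgebraic geometry, a finite stratification of $V^0$ by smooth pieces, all but one of codimension $\ge 2$, along each of which $P$ has a constant finite vanishing order — in particular justifying the local factorization $P=g^{a_0}u$ that drives the logarithmic estimate. A more minor, but still delicate, point is extracting $C^r$-openness of $\mathcal{T}$ from the transversality theorem even though the individual strata $S_i$ are not closed; this is what the nested closed algebraic sets $V^0\supset V^1$ make possible, via the reduction to the ambient open set $U=\GL\setminus V^1$ carried out above.
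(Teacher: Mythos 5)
Your proposal does not prove the statement it was assigned. The statement is Theorem~\ref{TTT} itself (the Thom transversality theorem: density in $C^r(\bS;M)$, $r\in[1,\infty]$, of the maps transverse to a submanifold $N\subset M$, plus openness when $N$ is closed). What you wrote is instead a proof of Proposition~\ref{Technical tool} — the openness and density of $\cA_P=\{A:\log|P\circ A|\in L^1\}$ — and, crucially, your step (ii) explicitly \emph{applies} Theorem~\ref{TTT} to each stratum $S_i$. As an argument for Theorem~\ref{TTT} this is circular: the theorem is used as a black box and never established. Note that the paper does not prove Theorem~\ref{TTT} either; it quotes it from \cite[Corollary~4.12, page~56]{GoGu1973}. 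So a genuine proof attempt here would have to supply the classical argument: for density, cover $A(\bS)$ by charts in which $N$ is locally $g^{-1}(0)$ for a submersion $g$ (or flatten $N$ to $\real^k\times\{0\}$), consider the finite-dimensional family of perturbations $A_v$ obtained by adding small translations $v$ in the chart, check that the evaluation map $(t,v)\mapsto A_v(t)$ is a submersion, and invoke Sard's theorem / the parametric transversality lemma to conclude that $A_v\pitchfork N$ for almost every small $v$; for openness when $N$ is closed, use compactness of $\bS$ and closedness of $N$ to see that transversality along $A(\bS)$ is stable under $C^1$-small (hence $C^r$-small, $r\ge 1$) perturbations of $A$.

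As a side remark, the content you did produce is essentially the paper's own proof of Proposition~\ref{Technical tool} (the chain $V^0\supset V^1=\operatorname{Sing}(V^0)\supset\cdots$, transversality to the strata, avoidance of the codimension-$\ge 2$ part, and the logarithmic estimate at the finitely many transverse zeros), with some extra — and not fully justified — machinery about a constant vanishing order $a_0$ and a local factorization $P=g^{a_0}u$, which the paper avoids by working directly with the first-order Taylor expansion of $P\circ B$ at each transverse zero. But that is a different statement from the one you were asked to prove, so the assignment is not addressed.
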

\begin{remark}\label{disjoint case}
A particular case of this theorem that we will use is when codimension of $N$ is large than $1$. In this case, the unique way to a map $A \in C^r(\bS;M)$ to be transversal to $N$ is  
\begin{align*}
A(\bS)\cap N = \varnothing.
\end{align*}

\end{remark}
For a complete proof and details around this result, see \cite[Corollary~4.12, page~56]{GoGu1973}.

As a corollary of Theorem \ref{TTT} in our context, we have 
\begin{lemma}\label{TTT.2}
Take $A \in C^r(\bS;\GL)$ and assume that $d(A(\bS), V^1) > 0$. Then, for all $\nU \subset C^r(\bS;\GL)$, neighborhood of $A$, there exists $\nV \subset \nU$, open, and $a>0$ such that for all $B \in \nV$ we have:
\begin{itemize}
  \item $d(B(\bS), V^1) > a > 0$
  \item $B \pitchfork V^0.$
 \end{itemize}
\end{lemma}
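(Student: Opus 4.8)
The statement I need is a local/uniform version of Thom transversality: starting from a map $A$ that stays at positive distance from the singular stratum $V^1$, I want a neighborhood $\nV$ of $A$ and a uniform $a>0$ so that every $B\in\nV$ both keeps distance $>a$ from $V^1$ \emph{and} is transverse to the regular stratum $V^0$. Here $V^1=\mathrm{Sing}(V^0)$ is a closed algebraic subset of the closed set $V^0\subset\GL$, and by the dimension inequality $\dim V^1<\dim V^0\le d^2-1$, so $V^1$ has codimension at least $2$ in $\GL$.

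\begin{proof}
Since $A(\bS)$ is compact and $d(A(\bS),V^1)=:2\rho>0$, the open set $\nU_0=\{B\in C^r(\bS;\GL):\ d(B(\bS),V^1)>\rho\}$ is a $C^0$-neighborhood of $A$ (the map $B\mapsto d(B(\bS),V^1)$ is $C^0$-continuous and positive at $A$), hence also a $C^r$-neighborhood. Set $a=\rho$; then the first bullet holds for every $B\in\nU_0$, and in particular it holds on any subset $\nV\subset\nU_0\cap\nU$.

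It remains to produce, inside $\nU_0\cap\nU$, an open set $\nV$ on which transversality to $V^0$ holds. Let $V^0_{\mathrm{reg}}=V^0\setminus V^1$ be the regular part; this is a $C^\infty$ submanifold of $\GL$ (possibly with several components of codimension $\ge 1$), and for $B\in\nU_0$ the image $B(\bS)$ cannot meet $V^1$, so $B\pitchfork V^0$ is equivalent to $B\pitchfork V^0_{\mathrm{reg}}$. Now $V^0_{\mathrm{reg}}$ is a submanifold of the open manifold $M=\GL\setminus V^1$, and $\nU_0$ is precisely the set of $C^r$ maps $\bS\to\GL$ whose image lies in the closed-distance region $\{x:\ d(x,V^1)\ge\rho\}\subset M$. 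Applying Theorem \ref{TTT} (Thom transversality) with manifold $M$ and submanifold $N=V^0_{\mathrm{reg}}$: the set of $A'\in C^r(\bS;M)$ with $A'\pitchfork N$ is dense, and since $V^0_{\mathrm{reg}}$ is closed \emph{as a subset of} $M$ (its closure in $M$ adds only points of $V^1$, which are not in $M$), that set is also open. Intersecting with $\nU_0\cap\nU$ gives a nonempty open set, and by density it is nonempty; shrinking if necessary we obtain $\nV\subset\nU_0\cap\nU$ open with $B\pitchfork V^0_{\mathrm{reg}}$ — equivalently $B\pitchfork V^0$ — for all $B\in\nV$. Note that when $\mathrm{codim}\,V^0_{\mathrm{reg}}\ge 2$ this simply says $B(\bS)\cap V^0=\varnothing$, as in Remark \ref{disjoint case}.
\end{proof}

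The only delicate point is the openness half of the conclusion: Thom's theorem gives openness of the transverse maps when the target submanifold is \emph{closed}, but $V^0_{\mathrm{reg}}=V^0\setminus V^1$ is not closed in $\GL$. The fix, which I expect to be the crux, is to pass to the open ambient manifold $M=\GL\setminus V^1$, in which $V^0_{\mathrm{reg}}$ \emph{is} closed, and to observe that the first bullet confines all the competitor maps $B$ to a region of $M$ uniformly bounded away from $V^1$; this is exactly what makes the restriction of the (now open and dense) transversality condition survive intersection with $\nU_0\cap\nU$ and still contain an open set. Everything else — continuity of $B\mapsto d(B(\bS),V^1)$, compactness of $\bS$, the equivalence $B\pitchfork V^0\Leftrightarrow B\pitchfork V^0_{\mathrm{reg}}$ once $B(\bS)\cap V^1=\varnothing$ — is routine.
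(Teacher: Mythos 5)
Your proof is correct and follows the same route the paper intends: the lemma is stated there as a direct corollary of the Thom transversality theorem, with the first bullet coming from $C^0$-continuity of $B\mapsto d(B(\bS),V^1)$ on the compact image and the second from openness plus density of transversality to the regular stratum. Your extra care in passing to the open ambient manifold $M=\GL\setminus V^1$, where $V^0\setminus V^1$ is closed so that the openness half of Theorem~\ref{TTT} applies, fills in a detail the paper leaves implicit, but it is the same argument.
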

Observe that as a corollary of this lemma and compacity of $\bS$, we have 
$$
\#B(\bS)\cap V^0 < \infty.
$$
In fact, we can assume that the intersection above has the same cardinality for every $B \in \nV$.

Let $A$ and $\nV$ as in Lemma \ref{TTT.2} and consider $B \in \nV$. Set $\{t_1,\ldots, t_k\}\subset \bS$ such that $\{B(t_1),\ldots, B(t_k)\} = B(\bS)\cap V^0$. By transversality we have that,
$$
(P\circ B)'(t_i) = \nabla P(B(t_i))\cdot B'(t_i) \neq 0, \forall i = 1,\ldots, k. 
$$
Set $s_1 = \frac{1}{2}\min\{|t_i-t_l|; i\neq l\}>0$. By Taylor's formula we have
$$
P\circ B(t) = (P\circ B)'(t_i)(t-t_i) + o_i(|t-t_i|).
$$
Consider $s_2>0$ such that if $|t-t_i|<s_2$ for some $i=1,\ldots,k$, then
$$
\left|(P\circ B)'(t_i) - \frac{o_i(|t-t_i|)}{|t-t_i|}\right| \geq \frac{1}{2}\left|(P\circ B)'(t_i)\right| \neq 0.
$$
In particular, there exists $C_1 > 0$ such that if $|t-t_i|< s_2$, for some $i =1,\ldots,k$, we have
$$
|\log\left|(P\circ B)'(t_i) - \frac{o_i(|t-t_i|)}{|t-t_i|}\right|| \leq C_1.
$$

Define $s = \frac{1}{2}\min\{s_1,s_2\}$ and consider the intervals $I_i = (t_i-s,t_i+s)\subset \bS$. So,
\begin{align*}
\displaystyle\int_{\bS}|\log|P\circ B(t)||dt
&=\displaystyle\int_{\bS\backslash\displaystyle\cup_{i=1}^{k}I_i}|\log|P\circ B(t)||dt + \displaystyle\int_{\displaystyle\cup_{i=1}^k I_i}|\log|P\circ B(t)||dt \\
&= (I) + (II).
\end{align*}
By compacity of $\bS\backslash\displaystyle\cup_{i=1}^k I_i$, and the fact that the function $P \circ B$ does not vanish in this set, there exists another constant $C_2>0$ such that
$$
|\log|(P\circ B(t))|| \leq C_2,
$$
for every $t \in \bS\backslash\displaystyle\cup_{i=1}^k I_i$. This gives
$$
(I)
= \displaystyle\int_{\bS\backslash\displaystyle\cup_{i=1}^{k}I_i}|\log|P\circ B(t)||dt
\leq C_2m(\bS\backslash\displaystyle\cup_{i=1}^k I_k)
$$
Note that by, Taylor's Formula in each $I_i$, we have
\begin{align*}
 \log|P\circ B(t)|
 = \log\left|(P\circ B)'(t_i) + \frac{o_i(|t-t_i|)}{|t-t_i|}\right| + \log|t -t_i|.
\end{align*}
Hence, by the choice of $s$, and the fact that the logarithm funtion is integrable at the origin, we conclude that
\begin{align*}
(II) 
&= \displaystyle\int_{\displaystyle\cup_{i=1}^k I_k}|\log|P\circ B(t)||dt
= \displaystyle\sum_{i=1}^{k}\displaystyle\int_{I_i}|\log|P\circ B(t)||dt\\
&\leq \displaystyle\sum_{i=1}^k\displaystyle\int_{I_i}|\log\left|(P\circ B)'(t_i) + \frac{o_i(|t-t_i|)}{|t-t_i|}\right||dt + \displaystyle\sum_{i=1}^k\displaystyle\int_{I_i}|\log|t-t_i||dt\\
&\leq \displaystyle\sum_{i=1}^k m(I_i)C_1 + C < \infty.
\end{align*}
Therefore, for each $B \in \nV$ we have that $\log|P\circ B| \in L^1(\bS;m)$.

To conclude the proof of the main lemma what is left to show is that the following set
\begin{align}\label{set A}
\cA = \{A \in C^r(\bS;\GL;\ d(A(\bS), V^1) > 0 \},
\end{align}
is dense in $C^r(\bS;\GL)$. This is a consequence, as we will see, of the fact that the $V^1$ is an algebraic variety of codimension greater or equal than one in $\GL$.

Consider the following chain of algebraic sets:
$$
V^m\varsubsetneq V^{m-1}\varsubsetneq \ldots \varsubsetneq V^1 \varsubsetneq V^0,
$$
where $V^i = \mbox{Sing}(V^{i-1})$. We know that this is a finite chain since $V^i$ is an algebraic variety of dimension strictly less than the dimension of $V^{i-1}$. We can assume that $V^m$ is a regular submanifold of $Gl_d(\real)$ (the chain stops in this moment). Moreover, by definition, we have that each $V^i$ is a closed subset of $V^{i-1}$ and $V^{i-1}\backslash V^i$ is a regular submanifold of $\GL$ of codimension large than $1$.

Using the remark \ref{disjoint case} with $M = \GL$ and $N = V^m$ which is a regular submanifold of $\GL$, we have that the set
$$
\cA_m = \{A\in C^r(\bS;\GL);\ A(\bS)\cap V^m = \varnothing\},
$$
is dense in $C^r(\bS;\GL)$. More generally, using the same argument, now with $M = \GL$ and $N = V^i\backslash V^{i+1}$, we will obtain that the set
$$
\cA_i = \{A\in C^r(\bS;\GL);\ A(\bS)\cap V^{i}\backslash V^{i+1} = \varnothing\},
$$
is dense in $C^r(\bS;\GL)$, for every $i = 1,\ldots,m-1$ (note that we are using strongly that the codimension is large enough). Hence, the set
$$
\cA = \displaystyle\bigcap_{i=1}^m\cA_i,
$$
is (open) and dense in $C^r(\bS;\GL)$. Observing that this intersection is the set $\cA$ defined above in \ref{set A}, we conclude the proof of the main lemma in the case $r \in [1,\infty]$.

Assume that $r = \omega$. In this case we have that $P\circ A:\bS\rightarrow \real$ (P is a polynomial and in particular a analytic function) is an analytic function for any $A\in C^{\omega}(\bS, \GL)$. In this case, either $P \circ A$ is constant equal to zero or, has only finitely many zeros and those zeros have finite order in the sense that if we consider a neighborhood of a zero $t_i \in\bS$ and write
$$
P\circ A(t) = (t-t_i)^{m_i}g_i(t),
$$
where $m_i \in \mathbb{N}$, $g_i \neq 0$ on this neighborhood. Since the function $\log t$ is $m$-integrable on $[0,1]$, we conclude that
$$
\log\left|P\circ A\right|\in L^1(\bS,m),
$$
for all $A \in C^{\omega}(\bS, \GL)$ such that $P \circ A$ is not zero. Assume then that $P \circ A \equiv 0$. Since, the polynomial $P$ is non constant, the interior of $[P=0]$ is empty. So, for any $A \in [P=0]$ and for any $\varepsilon > 0$, there exist $B \in \GL$, $||B|| = 1$ and a positive $\delta < \epsilon$ such that $A+\delta B \notin [P =0]$. In particular, taking $A = A (t_0)$, for some $t_0 \in \bS$ and defining $\tilde{A} \in C^{\omega}(\bS,\GL)$ given by
$$
\tilde{A}(t) = A(t) + \delta B,
$$
we have that $\tilde{A}$ is $\delta$-$C^{\omega}$ close to $A$ and $P\circ \tilde{A}$ does not vanish identically. Hence, we fall in the previous case and
$$
\log\left|P\circ\tilde{A}\right|\in L^1(\bS,m).
$$
This concludes the proof of the Proposition \ref{Technical tool}.

\subsection*{Acknowledgements}
The authors will like to thank Lucas Backes and Karina Marin for suggestions that improve the writing of the manuscript. Work partially supported by Fondation Louis D-Institut de France (project coordinated by M. Viana). J.B. was supported by CAPES and M.P by the ERC project 692925 NUHGD.

\bibliography{bib}
\bibliographystyle{plain}

\information

\end{document}